\documentclass{amsart}
\usepackage{amssymb, amscd, ascmac}
\usepackage{pb-diagram}



\newtheorem{theorem}{Theorem}[section]
\newtheorem{proposition}{Proposition}[section]
\newtheorem{lemma}{Lemma}[section]

\theoremstyle{definition}

\makeatletter
\let\c@proposition=\c@theorem
\let\c@lemma=\c@theorem
\let\c@corollary=\c@theorem
\let\c@conjecture=\c@theorem
\let\c@definition=\c@theorem
\let\c@remark=\c@theorem
\let\c@assumption=\c@theorem
\makeatother

\newcommand{\PY}{Pirutka and Yagita }
\newcommand{\transpose}[1]{{^t{#1}}}
\newcommand{\Gone}{G_1}
\newcommand{\Reduction}{\rho}
\newcommand{\Tor}{\mathrm{torsion}}



\title[Integral Tate conjecture]
{
On the integral Tate conjecture for finite fields}
\author
{
Masaki Kameko
}
\address
{
Department of Mathematical Sciences,
Shibaura Institute of Technology,
307 Minuma-ku Fukasaku, Saitama-City 337-8570, Japan
}
 \email
 {
kameko@shibaura-it.ac.jp
 }

\subjclass[2000]{Primary 14C15; Secondary 55R35, 14L30}
\keywords{Classifying space, cycle map, Tate conjecture, Hodge conjecture, 
projective unitary group}


\begin{document} 

\begin{abstract}
We give non-torsion counterexamples against the integral Tate conjecture for finite fields. 
We  extend the result due to \PY for prime numbers $2, 3, 5$
to
all prime numbers.
\end{abstract}

\maketitle

\section{Introduction}

The integral Tate conjecture is a conjecture on the subjectivity of the cycle 
map in algebraic geometry. 
In \cite{pirutka-yagita-2014}, \PY  proved 
Theorem~\ref{theorem:main} below for $\ell=2, 3, 5$, 
which gives a non-torsion counterexample for the integral Tate conjecture over a finite field.
We prove Theorem~\ref{theorem:main}
for all prime numbers $\ell$.
%
%
\begin{theorem} 
\label{theorem:main} 
Let $\ell$ be a prime number. There exists a smooth and projective variety 
$X$ over a finite field $k$ whose characteristic differs from $\ell$ such that
the cycle map
\[
CH^2(X_{\bar{k}}) \otimes \mathbb{Z}_{\ell} \to 
\bigcup_{U} H^4_{{e}t}(X_{\bar{k}}, \mathbb{Z}_{\ell}(2))^U/\Tor
\]
is not surjective, where $\bar{k}$ is the algebraic closure of $k$ and $U$ ranges over open subgroups 
of $\mathrm{Gal}(\bar{k}/k)$,
\end{theorem}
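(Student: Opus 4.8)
The strategy is Totaro's: realize the classifying space of a projective unitary group as a limit of smooth projective varieties over $k$, transport the integral Tate question to a comparison between the Chow group and the étale cohomology of that classifying space, and exhibit a non-torsion class in $H^4$ that the cycle map misses. For the given prime $\ell$ I would take $G$ to be the projective linear group $PGL_\ell$ over $k$, whose classifying space realizes, over $\mathbb{C}$, the classifying space of the projective unitary group $PU_\ell$. Following Totaro, approximate $BG$ by the quotients $(V\setminus S)/G$ for a faithful representation $V$ with $S$ the non-free locus; these are smooth and quasi-projective, and I would pass to a smooth projective model $X$ whose Chow group $CH^2$ and étale cohomology $H^4$ agree with those of $BG$ in the relevant range.

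Next I would dispose of the Galois action. Over $\bar k$ the étale cohomology $H^*_{et}(BG_{\bar k},\mathbb{Z}_\ell)$ coincides with the singular cohomology $H^*(BPU_\ell,\mathbb{Z}_\ell)$, and Frobenius acts on $H^{2i}(-,\mathbb{Z}_\ell(i))$ as the identity, so every class in $H^4(X_{\bar k},\mathbb{Z}_\ell(2))$ is already Tate. The right-hand side of the cycle map is therefore $H^4(X_{\bar k},\mathbb{Z}_\ell(2))/\Tor$, which contains the free generator $u$ coming from the degree-four class of $BPU_\ell$. The integral Tate conjecture for $X$ thus reduces to asking whether the cycle map hits $u$.

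The heart of the argument is to show that $u\notin\mathrm{im}\bigl(CH^2\otimes\mathbb{Z}_\ell\bigr)$ although $\ell u$ is algebraic, so that the cokernel is the non-zero finite group $\mathbb{Z}/\ell$ and the missed classes have infinite order. The image of the cycle map is the subgroup spanned by Chern classes of $G$-representations; since $PGL_\ell$-bundles carry a Brauer obstruction and admit no universal vector bundle, $u$ itself need not be such a Chern class. To prove that it is not, I would restrict to a subgroup $\Gone\subset G$, namely the extraspecial $\ell$-group of order $\ell^3$ embedded through its projectivized Weil representation (this is where the Heisenberg relations $\transpose{g}Jg=J$ enter), and reduce modulo $\ell$ via $\Reduction\colon H^4(-,\mathbb{Z}_\ell)\to H^4(-,\mathbb{Z}/\ell)$. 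On $B\Gone$ the Chow ring and the mod-$\ell$ cohomology are understood, and one knows which degree-four classes fail to be algebraic; I would identify $\Reduction(u)|_{B\Gone}$ with such a class, detected by a Milnor operation, so that $u|_{B\Gone}$, hence $u$ itself, is not algebraic.

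The main obstacle is uniformity in $\ell$. For $\ell=2,3,5$ one may read off the restriction $\Reduction(u)|_{B\Gone}$ and its non-algebraicity from explicit tables for $H^*(BPU_\ell,\mathbb{Z}/\ell)$ and for the Chow ring of the extraspecial group, but for general $\ell$ no such tables are available. The crux is therefore a prime-independent computation of $u|_{B\Gone}$ in terms of the symplectic structure of the Weil representation, together with a uniform proof that the relevant Milnor operation does not annihilate its mod-$\ell$ reduction while every Chern class does. A secondary difficulty is checking that passing from the quasi-projective approximation to the smooth projective model $X$ neither kills $u$ nor enlarges the image of the cycle map past it.
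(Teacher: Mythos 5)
Your overall strategy --- approximating $BG$ by smooth projective varieties in the style of Totaro, passing to a finite extension so that Frobenius acts trivially, and obstructing algebraicity of a non-torsion degree-four class by showing that the Milnor operation $Q_1$ does not annihilate its mod $\ell$ reduction although it annihilates every class in the image of the cycle map --- is exactly the Pirutka--Yagita mechanism, which the paper imports wholesale as Proposition~\ref{proposition:py}; that proposition packages all of the algebro-geometric steps you describe, so deferring to it is legitimate. The genuine gap is in the two places where new mathematics is actually required, and your choices there do not work. First, the group: taking $G=PGL_\ell$ and detecting on the extraspecial group $\ell^{1+2}_{+}$ embedded via the projectivized Heisenberg representation is a dead end, because the image of $\ell^{1+2}_{+}$ in $PU(\ell)$ is the rank-two elementary abelian group $A_2$, and for odd $\ell$ the invariants $H^4(BA_2;\mathbb{Z}/\ell)^{SL_2(\mathbb{Z}/\ell)}$ vanish (the Dickson--Mui module generators sit in degrees $0,2,3,2\ell+1$ and the polynomial generators in degrees $\geq 2\ell+2$); concretely, the generator $v_2$ of $H^2(BPU(\ell);\mathbb{Z}/\ell)$ restricts to $x_1y_1$, so the degree-four class $v_2^2$ restricts to $(x_1y_1)^2=0$. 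Thus every class you could hope to detect restricts to zero on your chosen subgroup, and the $Q_1$-criterion yields nothing. To support a class such as $Q_0(x_1y_1z_1)$ with $Q_1Q_0(x_1y_1z_1)\neq 0$ one needs a rank-\emph{three} elementary abelian subgroup in degree four, and this is precisely why the paper abandons $PGL_\ell$ in favour of $G_1=(SU(\ell)\times SU(\ell))/\langle\Delta(\xi)\rangle$, which contains the rank-three subgroup $A_3$ generated by $\Delta(\alpha),\Delta(\beta),\Gamma(\xi)$ modulo the center.

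Second, you correctly isolate the crux --- a prime-independent verification that some non-torsion integral class in degree four restricts to a class not killed by $Q_1$ --- but you explicitly leave it unproved, and it is the entire content of the paper: the computation of the Weyl-group invariants $H^4(BA_3;\mathbb{Z}/\ell)^{W}=\mathbb{Z}/\ell\{Q_0(x_1y_1z_1)\}$ together with the explicit formula showing $Q_1Q_0(x_1y_1z_1)\neq 0$ (Proposition~\ref{proposition:odd-invariants}), and the Leray--Serre spectral sequence comparison of $BA_3$, $BG_1$ and $B(PU(\ell)\times PU(\ell))$ showing that $H^4(BG_1;\mathbb{Z})\cong\mathbb{Z}\oplus\mathbb{Z}$ is torsion free, reduces surjectively mod $\ell$, and maps onto these invariants (Propositions~\ref{proposition:bg1} and~\ref{proposition:projective}). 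As written, your argument identifies the right framework but supplies neither a group for which the hypothesis of Proposition~\ref{proposition:py} can be verified nor the verification itself, so it does not close.
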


 We refer the reader to Colliot-Th\'{e}l\`{e}ns and Szamuely
 \cite{colliot-thelene-szamuely-2010}, 
 \PY \cite{pirutka-yagita-2014} for the details of the integral Tate conjecture.
A counterexample against the integral Tate conjecture
was provided by Atiyah and Hirzebruch in \cite{atiyah-hirzebruch-1962}
as a counterexample against the integral Hodge conjecture. 
They used torsion elements in the cohomology to give a counterexample.
In \cite{totaro-1997}, \cite{totaro-1999}, Totaro used Chow rings of 
classifying spaces of linear 
algebraic groups to study cycle maps. 
In \cite{pirutka-yagita-2014}, 
using 
\PY used the cohomology of classifying spaces
of exceptional Lie groups
to
prove Theorem~\ref{theorem:main} 
for $\ell=2$, $3$, $5$. In this paper, we reinforce the topological side of their paper 
\cite{pirutka-yagita-2014}
to extend their results to all prime numbers $\ell$.

Let $G$ be a reductive complex linear algebraic group 
or its maximal compact subgroup. 
Since the homotopy type of these topological groups are the same, 
and since we deal with the ordinary cohomology of their classifying spaces, 
we do not need to make clear distinction between a reductive complex linear 
algebraic group and a  connected Lie group.
We denote by $H^i(BG;\mathbb{Z})$ the $i$-th ordinary 
integral cohomology of the topological space $BG$,
its quotient group with respect to its torsion subgroup by 
$H^i(BG;\mathbb{Z})/\Tor$, and
the  $i$-th mod $\ell$ ordinary cohomology of $BG$
 by $H^i(BG;\mathbb{Z}/\ell)$, respectively.
We write  
\[
\Reduction:H^i(BG;\mathbb{Z})\to H^i(BG;\mathbb{Z}/\ell)
\]
for the mod $\ell$ reduction.
We denote by $Q_i$ the $i$-th Milnor operation
on the mod $\ell$ ordinary cohomology.
In particular, $Q_0$ is the Bockstein operation 
on the mod $\ell$ cohomology.
 The proposition below is nothing but Proposition~{1.4} in \cite{pirutka-yagita-2014}.
 It provides the bridge between algebraic geometry and algebraic topology.
%
%
\begin{proposition}[\PY]
\label{proposition:py}
Suppose that there exists a non-zero element $u_4
\in H^{4}(BG;\mathbb{Z})/\Tor$ such that 
$Q_1 \Reduction({u}_4)\not=0$ in $H^{2\ell+3}(BG;\mathbb{Z}/\ell)$,
then there exists 
a smooth and projective variety $X$ over a finite field 
$k$ whose characteristic is prime to $\ell$
such that the cycle map 
\[
CH^2(X_{\bar{k}}) \otimes \mathbb{Z}_{\ell} \to 
\bigcup_{U} H^4_{{e}t}(X_{\bar{k}}, \mathbb{Z}_{\ell}(2))^U/\Tor
\]
is not surjective.
\end{proposition}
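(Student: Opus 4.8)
The plan is to realize a neighborhood of $BG$ by smooth projective varieties over a finite field and to use the Milnor operation $Q_1$ as an obstruction to algebraicity, following the template of Totaro and Atiyah--Hirzebruch. The class $u_4$ will produce a non-torsion, Galois-invariant étale class that cannot lie in the image of the cycle map precisely because $Q_1\Reduction(u_4)\neq 0$.

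First I would approximate $BG$ algebraically. Choosing a faithful representation $V$ of $G$ over a number field, $G$ acts freely on the complement $V\setminus S$ of a closed subset $S$ of large codimension, and the geometric quotient $Y=(V\setminus S)/G$ is a smooth quasi-projective variety whose cohomology and Chow groups agree with those of $BG$ up to a degree that grows with $\dim V$. In particular, for $\dim V$ large enough, $H^4$ and $H^{2\ell+3}$ of $Y(\mathbb{C})$ receive the classes $u_4$ and $Q_1\Reduction(u_4)$, and the equivariant cycle map is the cycle map of $Y$. I would then pass to a smooth projective model: compactify $Y$ and, using resolution of singularities, arrange a smooth projective $X_{\mathbb{C}}$ carrying the required degree-$4$ class, taking care that the boundary does not destroy it. Spreading $X_{\mathbb{C}}$ out over a finitely generated ring and reducing at a prime of good reduction with residue characteristic different from $\ell$ produces a smooth projective $X$ over a finite field $k$. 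By smooth proper base change together with the comparison between singular and $\ell$-adic étale cohomology, the groups $H^4_{et}(X_{\bar k},\mathbb{Z}_\ell(2))$ and $H^{2\ell+3}_{et}(X_{\bar k},\mathbb{Z}/\ell)$ are identified with the corresponding topological groups, compatibly with cycle maps and with the étale Milnor operation $Q_1$.

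The heart of the argument is the obstruction furnished by $Q_1$. The key input is Totaro's refinement of the cycle map, namely that $CH^2(X_{\bar k})\otimes\mathbb{Z}_\ell\to H^4_{et}(X_{\bar k},\mathbb{Z}_\ell(2))$ factors through complex (equivalently algebraic) cobordism, so that any class in its image lifts to the bottom of the Atiyah--Hirzebruch spectral sequence for the Brown--Peterson spectrum $BP$ at the prime $\ell$. Since the first nontrivial differential of that spectral sequence is, up to a factor of $v_1$, the Milnor primitive $Q_1$ of degree $2\ell-1$, every class $z$ in the image of the cycle map satisfies $Q_1\Reduction(z)=0$. The class associated with $u_4$ is non-torsion because $u_4\in H^4(BG;\mathbb{Z})/\Tor$ is, and it is invariant under a suitable open subgroup $U$ of $\mathrm{Gal}(\bar k/k)$ because it is defined over a finite extension of $k$; hence it lies in $\bigcup_U H^4_{et}(X_{\bar k},\mathbb{Z}_\ell(2))^U/\Tor$. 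If it were in the image of the cycle map, then $Q_1\Reduction(u_4)$ would vanish, contradicting the hypothesis. Therefore the cycle map is not surjective.

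The main obstacle I anticipate is twofold, and both parts are geometric rather than topological. First, replacing the quasi-projective approximation $Y$ by a smooth projective variety while preserving both the class $u_4$ and the nonvanishing of $Q_1\Reduction(u_4)$ requires controlling the cohomology contributed by the compactifying boundary; this is where the construction must be arranged with care. Second, one must verify that the Milnor operation and Totaro's cobordism factorization are genuinely compatible with the étale realization over the finite field, so that the topological nonvanishing $Q_1\Reduction(u_4)\neq 0$ survives base change and continues to obstruct algebraicity in $H^4_{et}$. Granting these compatibilities, the chain of identifications reduces the statement to the purely topological fact about $Q_1$ recalled above.
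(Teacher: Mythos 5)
A preliminary remark: this paper does not prove Proposition~\ref{proposition:py} at all. It is imported verbatim as Proposition~1.4 of Pirutka and Yagita \cite{pirutka-yagita-2014}, and the entire contribution of the present paper is the topological input (Proposition~\ref{proposition:projective}). So your proposal can only be measured against the argument in \cite{pirutka-yagita-2014} (and its antecedents in \cite{atiyah-hirzebruch-1962}, \cite{totaro-1997}, \cite{colliot-thelene-szamuely-2010}), whose overall strategy --- finite-dimensional approximation of $BG$ by quotients $(V\setminus S)/G$, passage to a smooth projective model over a finite field, and vanishing of $Q_1$ on the image of the cycle map as the obstruction --- your sketch does reproduce in outline.

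That said, the two points you yourself flag as ``obstacles'' are genuine gaps, not verifications to be deferred. First, the compactification step. If $Y=(V\setminus S)/G$ and $X$ is any smooth compactification, the boundary has codimension one, so purity gives no control on $H^4(X;\mathbb{Z}_\ell)\to H^4(Y;\mathbb{Z}_\ell)$, and there is no reason for $u_4$ to be in its image; ``taking care that the boundary does not destroy it'' is precisely the statement to be proved. Note also that one cannot sidestep the problem by restricting to a finite subgroup and invoking Godeaux--Serre, because the hypothesis requires $u_4$ to be non-torsion, which forces $G$ to be positive-dimensional. The known arguments supply a real mechanism here: for instance, a normal compactification of $Y$ followed by a Gabber alteration of degree prime to $\ell$ and a transfer argument to keep $u_4$ and $Q_1\Reduction(u_4)$ alive on the open part, together with a weight argument (the classes pulled back from $BG$ are pure of weight $4$, hence lie in the image of restriction from any smooth projective model). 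None of this is visible in your route through resolution over $\mathbb{C}$ and spreading out.

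Second, the $Q_1$-obstruction itself. Totaro's factorization of the cycle map through $MU^*(-)\otimes_{MU^*}\mathbb{Z}$ and the $BP$-Atiyah--Hirzebruch spectral sequence is a statement about complex varieties and singular cohomology; it does not literally apply to $H^4_{et}(X_{\bar k},\mathbb{Z}/\ell)$ for $X$ over $\bar{\mathbb{F}}_p$. What one actually uses is either the algebraic-cobordism ($MGL$) analogue or, more directly, the motivic Milnor operation: $Q_1$ has bidegree $(2\ell-1,\ell-1)$ on mod $\ell$ motivic cohomology, so on $H^{4,2}=CH^2/\ell$ it lands in $H^{2\ell+3,\ell+1}$, which vanishes since $2\ell+3>2(\ell+1)$, and this vanishing is compatible with the \'etale realization. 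You should also address the ambiguity in $\Reduction(u_4)$ for $u_4\in H^4(BG;\mathbb{Z})/\Tor$ (the mod $\ell$ reduction depends on the integral lift up to reductions of torsion classes), since the contradiction ``$\mathrm{cl}(z)\equiv u_4$ modulo torsion implies $Q_1\Reduction(u_4)=0$'' requires controlling $Q_1\Reduction$ on that torsion; for the group $G_1$ of this paper the issue is vacuous because $H^4(BG_1;\mathbb{Z})$ is torsion-free.
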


With Proposition~\ref{proposition:py}, using 
Proposition~\ref{proposition:exceptional} below, 
as topological input, \PY  proved
Theorem~\ref{theorem:main} for $\ell=2, 3, 5$.
They used the mod $\ell$ cohomology of elementary abelian $\ell$-group of rank $3$ 
of these exceptional Lie groups in their proof. We 
denote the elementary abelian $\ell$-group  of rank $3$ by $A_3$.
For an odd prime number $\ell$, the ordinary mod $\ell$ cohomology 
$H^{*}(BA_3;\mathbb{Z}/\ell)$ is 
a polynomial tensor exterior algebra 
$\mathbb{Z}/\ell[x_2, y_2, z_2]\otimes 
\Lambda(x_1, y_1, z_1)$ 
where $x_1, y_1, z_1$ are degree $1$ elements corresponding to the generators of 
$A_3$ and 
$x_2=Q_0x_1, y_2=Q_0y_1, z_2=Q_0z_1$.
For $\ell=2$, the mod $2$ cohomology of $BA_3$ is a polynomial algebra 
$\mathbb{Z}/2[x_1,y_1, z_1]$.
In particular, we have $Q_1 Q_0(x_1y_1z_1)\not = 0$ in 
$H^{2\ell+3}(BA_3;\mathbb{Z}/\ell)$ 
for all prime numbers $\ell$.
%
%
\begin{proposition}\
\label{proposition:exceptional} 
For $(G, \ell)=(G_2, 2)$, $(F_4, 3)$, $(E_8, 5)$, 
there exist an elementary $\ell$-subgroup $A_3$ of $G$
of rank $3$ and 
a non-zero element $u_4\in H^4(BG;\mathbb{Z})/\Tor$ such that
$Q_1\iota^*(\Reduction(u_4))=Q_1Q_0(x_1y_1z_1)\not=0$.
\end{proposition}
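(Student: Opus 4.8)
The plan is to verify, for each of the three exceptional cases, the existence of the rank-3 elementary abelian subgroup together with a degree-4 integral class that detects the desired Milnor operation. The key structural fact I would rely on is that for these three pairs $(G,\ell)$, the group $G$ contains a well-known elementary abelian $\ell$-subgroup $A_3$ of rank $3$ (these are the nontoral elementary abelian subgroups classified in the work of Andersen--Grodal--Møller--Viruel and others), and the restriction map $\iota^*:H^*(BG;\mathbb{Z}/\ell)\to H^*(BA_3;\mathbb{Z}/\ell)$ induced by the inclusion $\iota:A_3\hookrightarrow G$ is understood. First I would fix the inclusion $\iota$ and recall the computation of $H^*(BG;\mathbb{Z})$ through degree $4$; for a compact simple Lie group one has $H^4(BG;\mathbb{Z})\cong\mathbb{Z}$, generated (modulo torsion) by a canonical class $u_4$ corresponding to the basic invariant quadratic form, so $H^4(BG;\mathbb{Z})/\Tor\cong\mathbb{Z}$ and a generator is the natural candidate.

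The main step is to compute the restriction $\iota^*(\Reduction(u_4))\in H^4(BA_3;\mathbb{Z}/\ell)$ and show it equals a specific degree-4 element whose image under $Q_1$ reproduces $Q_1Q_0(x_1y_1z_1)$. Since $H^4(BA_3;\mathbb{Z}/\ell)$ (for odd $\ell$) is spanned in terms of the generators described above by the degree-2 classes $x_2,y_2,z_2$ and products of degree-1 classes, the restriction of the reduced generator is, up to a unit, the second Chern-class-type symmetric expression $x_2^2+y_2^2+z_2^2$ or $x_2y_2+y_2z_2+z_2x_2$ arising from the quadratic invariant of the $A_3$-representation obtained by restricting a suitable representation of $G$. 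The precise coefficient is what one must pin down, and this is where the exceptional-group input enters: the embedding $A_3\hookrightarrow G$ is chosen so that the composite with a faithful representation decomposes into characters whose symmetric functions give a nondegenerate quadratic form, guaranteeing that $\iota^*\Reduction(u_4)$ is a nonzero multiple of the symmetric quadratic expression. For $\ell=2$ the analogous statement uses $H^*(BA_3;\mathbb{Z}/2)=\mathbb{Z}/2[x_1,y_1,z_1]$ and the class restricts to $x_1^2+y_1^2+z_1^2$ or the elementary symmetric polynomial $x_1y_1+y_1z_1+z_1x_1$.

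Once the restricted class is identified, the final computation is the Milnor-operation calculation using the relations $Q_0x_1=x_2$ (odd $\ell$), the derivation property of $Q_1$, and the formulas $Q_1x_1=x_2^\ell$-type expressions valid on $H^*(BA_3;\mathbb{Z}/\ell)$. Applying $Q_1$ to the symmetric quadratic class and comparing with $Q_1Q_0(x_1y_1z_1)$ — whose nonvanishing in $H^{2\ell+3}(BA_3;\mathbb{Z}/\ell)$ is already asserted in the text preceding the proposition — reduces to checking that the two expressions agree up to a nonzero scalar, which follows from the explicit action of $Q_1$ on polynomial generators of the truncated polynomial-exterior algebra. I expect the main obstacle to be pinning down the exact nonzero coefficient of $\iota^*\Reduction(u_4)$ relative to the symmetric generator, since this requires knowing the branching of the relevant $G$-representation along $A_3$ precisely enough to rule out the degenerate case where the quadratic form vanishes mod $\ell$; handling $G_2$, $F_4$, and $E_8$ uniformly while respecting the distinct primes $2,3,5$ is the delicate point. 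The verification that $Q_1$ does not annihilate the resulting class is then comparatively routine given the preceding remark that $Q_1Q_0(x_1y_1z_1)\neq0$.
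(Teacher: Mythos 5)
First, a point of orientation: the paper does not prove Proposition~\ref{proposition:exceptional} at all. It is quoted as the topological input from Pirutka and Yagita \cite{pirutka-yagita-2014}, and the whole point of the present paper is to \emph{replace} it by Proposition~\ref{proposition:projective}, which concerns $\Gone=SU(\ell)\times SU(\ell)/\langle\Delta(\xi)\rangle$ rather than the exceptional groups. So there is no in-paper argument to compare yours against; I can only assess your proposal on its own terms.

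On those terms there is a genuine gap, and it is fatal as written. You propose that $\iota^*\Reduction(u_4)$ is a symmetric quadratic expression in the degree-two generators, e.g.\ $x_2^2+y_2^2+z_2^2$ or $x_2y_2+y_2z_2+z_2x_2$ (and, for $\ell=2$, a symmetric polynomial in the $x_1^2,y_1^2,z_1^2$). But $Q_1$ annihilates every such class: since $x_2=Q_0x_1$ and $Q_1x_1=x_2^{\ell}$, one has $Q_1x_2=Q_1Q_0x_1=-Q_0Q_1x_1=-Q_0(x_2^{\ell})=0$, and $Q_1$ is a derivation, so $Q_1$ vanishes on the whole polynomial subalgebra $\mathbb{Z}/\ell[x_2,y_2,z_2]$ (and on all squares when $\ell=2$). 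Thus your candidate for the restriction satisfies $Q_1\iota^*\Reduction(u_4)=0$, which contradicts the conclusion $Q_1\iota^*(\Reduction(u_4))=Q_1Q_0(x_1y_1z_1)\not=0$. The class that actually does the work is $Q_0(x_1y_1z_1)=x_2y_1z_1-x_1y_2z_1+x_1y_1z_2$, which essentially involves the exterior (degree-one) generators. This is exactly where nontorality enters: if $A_3$ restricted a representation of $G$ to a sum of characters --- which is what your ``branching into characters whose symmetric functions give the quadratic form'' argument presupposes --- then $A_3$ would be conjugate into a maximal torus and the restriction of $u_4$ would land in $\mathbb{Z}/\ell[x_2,y_2,z_2]$, killing the statement. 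So the missing idea is not a coefficient to be pinned down but the identification of $\iota^*\Reduction(u_4)$ with (a nonzero multiple of) $Q_0(x_1y_1z_1)$ modulo $\ker Q_1$, which requires genuinely nontoral data about $A_3\subset G$; compare the paper's own computation for $\Gone$, where $H^4(BA_3;\mathbb{Z}/\ell)^W$ is one-dimensional, spanned by $Q_0(x_1y_1z_1)$, and the surjectivity of $\iota^*$ onto the invariants is established by a spectral-sequence argument rather than by character branching. Also note the minor degree mismatch in your $\ell=2$ discussion: a degree-4 class cannot restrict to the degree-2 classes $x_1^2+y_1^2+z_1^2$ or $x_1y_1+y_1z_1+z_1x_1$.
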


In this paper, we replace Proposition~\ref{proposition:exceptional}  
by the following proposition
to obtain the proof of  Theorem~\ref{theorem:main} for all prime numbers $\ell$.
Let 
\begin{align*}
\Gone&=SU(\ell)\times SU(\ell)/\langle \Delta(\xi)\rangle,
\end{align*}
 where 
$\langle \Delta(\xi)\rangle$ is a subgroup of the center of $SU(\ell)\times SU(\ell)$.
We give an explicit description of its elementary abelian $\ell$-subgroup $A_3$ and 
$\langle \Delta(\xi) \rangle$ in \S3.
We denote by $\iota:A_3\to \Gone$ the inclusion of $A_3$ into $\Gone$.
%
%
\begin{proposition}\label{proposition:projective}
For a prime number $\ell$,
the $4$-th integral cohomology of $BG_1$ is isomorphic to 
$\mathbb{Z}\oplus\mathbb{Z}$ and the mod $\ell$ reduction
\[
\Reduction:H^4(BG_1;\mathbb{Z}) \to H^4(BG_1;\mathbb{Z}/\ell)
\]
 is an epimorphism.
Moreover,  there exists
a non-zero element $u_4\in H^4(B\Gone;\mathbb{Z})$ such that
$Q_1\iota^*(\Reduction(u_4))=Q_1Q_0(x_1y_1z_1)\not =0$.
\end{proposition}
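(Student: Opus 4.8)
The plan is to build $B\Gone$ from the central extension $1\to\mathbb{Z}/\ell\to SU(\ell)\times SU(\ell)\to\Gone\to1$ and to read everything off the associated Serre spectral sequence. Write $c_2',c_2''\in H^4(B(SU(\ell)\times SU(\ell));\mathbb{Z})$ for the second Chern classes of the two factors and $\pi\colon B(SU(\ell)\times SU(\ell))\to B\Gone$ for the projection. The extension is classified by a map $w\colon B\Gone\to K(\mathbb{Z}/\ell,2)$ with homotopy fibre $B(SU(\ell)\times SU(\ell))$, so I would use the spectral sequence of $B(SU(\ell)\times SU(\ell))\to B\Gone\xrightarrow{w}K(\mathbb{Z}/\ell,2)$ with $E_2^{p,q}=H^p(K(\mathbb{Z}/\ell,2);\mathbb{Z})\otimes H^q(B(SU(\ell)\times SU(\ell));\mathbb{Z})$. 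In total degree $4$ the only nonzero term is $E_2^{0,4}=\mathbb{Z}c_2'\oplus\mathbb{Z}c_2''$, since the fibre vanishes in degrees $1,2,3$ and $H^4(K(\mathbb{Z}/\ell,2);\mathbb{Z})=0$ for every $\ell$. The only differential that can hit $E^{0,4}$ is the transgression $d_5$ into $E_5^{5,0}=H^5(K(\mathbb{Z}/\ell,2);\mathbb{Z})$, so $H^4(B\Gone;\mathbb{Z})=\ker d_5$, which is free of rank $2$ whatever $d_5$ is; this already gives $H^4(B\Gone;\mathbb{Z})\cong\mathbb{Z}\oplus\mathbb{Z}$ with $\pi^*$ injective.

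The crux of the first half is the transgression. By naturality along a projection $p_i\colon\Gone\to PU(\ell)$, the fibre class $c_2'$ transgresses exactly as $c_2$ does in $BSU(\ell)\to BPU(\ell)\to K(\mathbb{Z}/\ell,2)$, so it suffices to show $c_2$ does not descend to $BPU(\ell)$. I would prove this by restriction to a maximal torus: identifying $H^4(BPU(\ell);\mathbb{Z})$ with the Weyl-invariant part of $\mathrm{Sym}^2$ of the root lattice of $A_{\ell-1}$ and checking that the invariant quadratic form maps to $\ell\,c_2$ in $H^4(BSU(\ell);\mathbb{Z})$ (to $4c_2$ when $\ell=2$). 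Thus $\ell\mid d_5$, and mod $\ell$ the transgression $d_5(c_2)=\lambda\,\iota_2 Q_0\iota_2$ is nonzero. The factor-swap fixes $w$, whence $d_5(c_2')=d_5(c_2'')$, so $c_2'-c_2''$ survives and lifts to $u_1\in H^4(B\Gone;\mathbb{Z})$ with $\pi^*u_1=c_2'-c_2''$. Running the sequence mod $\ell$ shows $H^4(B\Gone;\mathbb{Z}/\ell)$ is two-dimensional, spanned by $\Reduction(u_1)$ and $w^2$; nontriviality of the transgression forces $wQ_0w=0$ in $H^5(B\Gone;\mathbb{Z}/\ell)$, hence $Q_0(w^2)=0$, and in fact $\Reduction(p_1^*\mu)$ (for the generator $\mu\in H^4(BPU(\ell);\mathbb{Z})$) is a nonzero element of $\langle w^2\rangle$. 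So $w^2\in\mathrm{image}(\Reduction)$, and $\Reduction$ is onto $H^4(B\Gone;\mathbb{Z}/\ell)$, which is the first assertion.

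For the second assertion I would take $u_4=u_1$. Since $\pi^*u_1=c_2'-c_2''$ and $\iota^*(w^2)=(x_1y_1)^2=0$, the entire content is the value of $\iota^*\Reduction(u_1)\in H^4(BA_3;\mathbb{Z}/\ell)$. Using the explicit $A_3=\langle(A,A),(B,B),(\xi I,I)\rangle$ of \S3, its pullback extension $1\to\mathbb{Z}/\ell\to\tilde A_3\to A_3\to1$ is the Heisenberg-type extension with class $\iota^*w=x_1y_1$ (the commutator $[(A,A),(B,B)]=(\xi I,\xi I)$ generates the kernel). I would compare the spectral sequence of $B\Gone$ over $K(\mathbb{Z}/\ell,2)$ with that of $BA_3\xrightarrow{x_1y_1}K(\mathbb{Z}/\ell,2)$, whose fibre is $B\tilde A_3$: the fibre-level restriction $\tilde\iota^*(c_2'-c_2'')$ vanishes, so $\iota^*\Reduction(u_1)$ lives in positive filtration, its leading terms being controlled by the fibre transgressions $\tau(e_1)=x_1y_1$ and $\tau(Q_0e_1)=Q_0(x_1y_1)=x_2y_1-x_1y_2$ of $B\mathbb{Z}/\ell$. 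Tracking the filtration-$2$ and filtration-$3$ contributions should identify $\iota^*\Reduction(u_1)\equiv x_2y_1z_1-x_1y_2z_1+x_1y_1z_2=Q_0(x_1y_1z_1)$ modulo $\ker Q_1$, whence $Q_1\iota^*\Reduction(u_1)=Q_1Q_0(x_1y_1z_1)$, nonzero by the derivation rule $Q_1x_1=x_2^{\ell}$, $Q_1x_2=0$ (and cyclically).

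The hard part is this last restriction. The obstruction is that the quotient $\bar\pi\colon\tilde A_3\to A_3$ kills $x_1y_1$ and hence all of $Q_0(x_1y_1z_1)$, so the answer is invisible on $\tilde A_3$; it is likewise invisible through Chern classes of honest representations of $\Gone$, since the divisibility in the transgression lemma forces every such class to reduce into $\langle w^2\rangle$. One is therefore forced to extract the positive-filtration leading term directly from the spectral-sequence comparison, and the delicate bookkeeping is to confirm that these terms assemble into precisely $Q_0(x_1y_1z_1)$ rather than another element of $\ker\bar\pi^*$. I expect this, together with the $PU(\ell)$ transgression, to be where the genuine work lies; the case $\ell=2$ (where $H^*(BA_3;\mathbb{Z}/2)$ is polynomial and $H^5(K(\mathbb{Z}/2,2);\mathbb{Z})=\mathbb{Z}/4$) requires the same argument with the appropriate $2$-primary adjustments.
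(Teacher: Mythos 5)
Your route to the first two claims (that $H^4(B\Gone;\mathbb{Z})\cong\mathbb{Z}\oplus\mathbb{Z}$ and that $\Reduction$ is onto) is genuinely different from the paper's and looks workable: the paper fibres $B\Gone$ over $B(PU(\ell)\times PU(\ell))$ with fibre $B\mathbb{Z}/\ell$ and runs the spectral sequence through the $E_4$-page using the structure of $H^*(BPU(\ell);\mathbb{Z}/\ell)$ up to degree $6$, whereas you fibre over $K(\mathbb{Z}/\ell,2)$ with fibre $B(SU(\ell)\times SU(\ell))$, so that in total degree $4$ everything reduces to the single transgression $d_5$. This is cleaner, at the cost of needing the image of $H^4(BPU(\ell);\mathbb{Z})\to H^4(BSU(\ell);\mathbb{Z})$ (your maximal-torus claim), which you assert without proof but which is standard.

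The genuine gap is the final assertion, which is the only part of the proposition that actually feeds Theorem~\ref{theorem:main}: you never establish that $Q_1\iota^*\Reduction(u_4)\neq0$. You explicitly defer it (``should identify\dots'', ``where the genuine work lies''), and in your setup it is the hardest possible place to do the computation: the restriction of $c_2'-c_2''$ to the fibre $B(\ell_+^{1+2}\times\mathbb{Z}/\ell)$ of $BA_3\to K(\mathbb{Z}/\ell,2)$ is $\pm\binom{\ell}{2}z_2^2\equiv0\bmod\ell$, so $\iota^*\Reduction(u_1)$ lives entirely in positive filtration over a $K(\mathbb{Z}/\ell,2)$ base with a complicated fibre, and extracting its leading term is exactly the bookkeeping you have not done. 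The paper sidesteps this with two devices you do not use: (i) since inner automorphisms act trivially on $H^*(B\Gone)$, the image of $\iota^*$ lies in $H^4(BA_3;\mathbb{Z}/\ell)^W$, which Proposition~\ref{proposition:odd-invariants} computes to be the one-dimensional space $\mathbb{Z}/\ell\{Q_0(x_1y_1z_1)\}$ --- so one only needs $\iota^*\Reduction(u_4)\neq0$, not an explicit expansion; and (ii) in the paper's choice of fibration (fibre $B\langle\Gamma(\xi)\rangle$, base $B(PU(\ell)\times PU(\ell))$, compared with $BA_3\to BA_2$) the nonvanishing is visible already at $E_\infty^{2,2}$, where $b_2z_2\mapsto x_1y_1z_2\neq0$. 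Without (i), or a completed version of your filtration analysis, the proof is incomplete at its decisive step. A further small correction: for $\ell=2$ the restriction of $w$ to $BA_3$ is $x_1^2+x_1y_1+y_1^2$ rather than $x_1y_1$, since $\alpha^2=\beta^2=\xi$ in $SU(2)$, so the ``$2$-primary adjustments'' you allude to are not cosmetic.
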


This paper is organized as follows:
From \S2 to \S5, we assume that $\ell$ is an odd prime number.
In \S2, as preliminaries, 
we describe the non-total elementary abelian $\ell$-subgroup $A_2$ of 
the projective unitary group $PU(\ell)$, that is, the quotient group of the special unitary group $SU(\ell)$ by its center
$\mathbb{Z}/\ell$, and the Weyl group of $A_2$.
The Weyl group of an elementary abelian $\ell$-subgroup $A$  of a group $G$ is 
$N(A)/C(A)$, where $N(A)$ is 
the normalizer subgroup of $A$ in $G$ and $C(A)$
is the centralizer subgroup of $A$ in $G$.
In \S3, we define the elementary abelian $\ell$-subgroup $A_3$ above and 
a subgroup $W$ of the Weyl group of $A_3$. Then, 
we compute the set of invariants $H^{4}(BA_3;\mathbb{Z}/\ell)^{W}$.
Since $\Gone$ is a connected Lie group, the inner automorphisms act trivially on 
the cohomology of $B\Gone$. Therefore, the induced homomorphism 
\[
\iota^*:H^*(B\Gone;\mathbb{Z}/\ell)\to H^* (BA_3;\mathbb{Z}/\ell)
\]
factors through the ring of invariants $H^*(BA_3;\mathbb{Z}/\ell)^W$.
We also show that this ring of invariants contains 
$Q_0(x_1y_1z_1)$ for an odd prime number $\ell$  and that
$Q_1 Q_0(x_1y_1z_1)$ is  non-zero in $H^{*}(BA_3;\mathbb{Z}/\ell)$. 
In \S4, again, as preliminaries, 
we recall the mod $\ell$ cohomology of the classifying space of 
$PU(\ell)$ up to degree $6$.
In \S5, 
by computing Leray-Serre spectral sequences for the mod $\ell$ cohomology 
of $BG_1$ and $BA_3$ and the induced homomorphism between them, we prove 
Proposition~\ref{proposition:projective}.
In \S6, we deal with the case $\ell=2$ to complete the proof of 
Proposition~\ref{proposition:projective}.

Throughout the rest of this paper, for elements $g, h$ in a group, we denote $h^{-1}g h$
by $g^h$. We also write $[g,h]$ for the commutator 
$g^{-1} h^{-1} g h$. For elements $g_0, g_1, \dots$ in a group, 
we denote by $\langle g_0,g_1, \dots \rangle$ 
the subgroup generated by $g_0, g_1, \dots$.
Also, for a ring $R$ and for a finite set $\{ m_0, \dots, m_r\}$, 
we denote by $R\{ m_0, \dots, m_r\}$ the free $R$-module spanned by 
$\{m_0, \dots, m_r\}$.

After the author sent a preliminary version of this paper to Nobuaki
Yagita, Yagita informed 
the author that Totaro used the group $
(SL(\ell)\times SL(\ell)/\mathbb{Z}/\ell) \times \mathbb{Z}/\ell$
to study the geometric and topological filtration of the complex representation ring
in his  quite recently published book \cite[\S15]{totaro-2014}. 
In the same time, Yagita encouraged the author to publish this paper. 
The author would like to thank Yagita for his kind encouragement.
The author is partially supported 
by the Japan Society for the Promotion of Science, 
Grant-in-Aid for Scientific Research (C) 25400097.


\section{The elementary abelian $\ell$-subgroup $A_2$ of $PU(\ell)$}

In this section, we recall the non-total maximal elementary abelian 
$\ell$-group $A_2$ in $PU(\ell)$ and the Weyl group of $A_2$.

First, we define the elementary abelian 
$\ell$-group $A_2$. Let $\xi=\exp(2\pi i/\ell)\in \mathbb{C}$ 
and let $I$ be the identity matrix in $SU(\ell)$.
By abuse of notation, we write $\xi$ for $\xi I$.
We define unitary matrixes $\alpha$, $\beta$ with determinant $1$ by
\begin{align*}
\alpha&=(\delta_{ij} \xi^i)=\mathrm{diag}(\xi^1, \xi^2, \cdots, \xi^{\ell}),\\
\beta&=(\delta_{i,j+1})
\end{align*}
where $\delta_{ij}=1$ if $i\equiv j \mod \ell$ 
and $\delta_{ij}=0$ if $i\not \equiv j \mod \ell$. 
Indeed, $\alpha^{-1}={\transpose{\bar{\alpha}}}
=\mathrm{diag}(\xi^{-1}, \xi^{-2}, \dots, \xi^{-\ell})$, 
$\beta^{-1}=\transpose{\bar{\beta}}=(\delta_{i,j-1})$. 
By direct computation, we obtain
\[
[\alpha,\beta] = \xi.
\]
Therefore, the subgroup
\[
A_2=\langle \alpha, \beta, \xi\rangle /\langle \xi \rangle
\]
 of 
$PU(\ell)=SU(\ell)/\langle \xi \rangle$ generated by $\alpha, \beta$ is
an elementary abelian $\ell$-subgroup of $PU(\ell)$.
We denote by $\iota:A_2 \to PU(\ell)$ the inclusion map.

Next, we recall  inner automorphisms of $SU(\ell)$ or $PU(\ell)$ 
which preserve $A_2$ in order to study the image of the induced homomorphism 
\[
\iota^*:H^{*}(BPU(\ell);\mathbb{Z}/\ell)
\to H^{*}(BA_2;\mathbb{Z}/\ell).
\]
It is well-known that the Weyl group of $A_2$ in $PU(\ell)$ is 
the finite special linear group $SL_2(\mathbb{Z}/\ell)$. 
Nevertheless, we give explicit matrix 
generators $\sigma, \tau$ for the Weyl group $SL_2(\mathbb{Z}/\ell)$ 
hoping it might be useful some day.

In order to define unitary matrixes $\sigma, \tau$ with determinant $1$, we consider the following sequence
of integers:
\[
a_0=0, \quad a_{i}=i+a_{i-1} \quad \mbox{$i\geq 1$}.
\]
We also use the following lemmas to define $\sigma$, $\tau$.
%
%
\begin{lemma}
\label{lemma:lemma-1}
It holds that
\[
\ell^{-1} \sum_{k=1}^{\ell} \xi^{km}=\delta_{n,0}.
\]
\end{lemma}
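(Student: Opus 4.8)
The statement is the classical orthogonality relation for the primitive $\ell$-th root of unity $\xi = \exp(2\pi i/\ell)$; reading the right-hand side with the paper's convention that $\delta_{ij}=1$ precisely when $i\equiv j \bmod \ell$, the claim is that $\ell^{-1}\sum_{k=1}^{\ell}\xi^{km}$ equals $1$ when $\ell\mid m$ and $0$ otherwise. The plan is to prove this by a direct finite geometric series computation, splitting into the two cases according to whether $\xi^m$ equals $1$. The key observation is simply that $\xi^{km}=(\xi^m)^k$, so the sum is a geometric progression with common ratio $\xi^m$.

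In the first case, suppose $m\equiv 0 \bmod \ell$. Then $\xi^m = 1$, so every summand $\xi^{km}$ equals $1$, and therefore
\[
\ell^{-1}\sum_{k=1}^{\ell}\xi^{km} = \ell^{-1}\cdot \ell = 1 = \delta_{m,0},
\]
as required.

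In the second case, suppose $m\not\equiv 0 \bmod \ell$. Since $\xi$ is a primitive $\ell$-th root of unity, $\xi^m\neq 1$, so the denominator in the geometric series formula is nonzero and I may write
\[
\sum_{k=1}^{\ell}\xi^{km} = \xi^m\cdot\frac{(\xi^m)^{\ell}-1}{\xi^m-1}.
\]
Because $\xi^{\ell}=1$, the numerator satisfies $(\xi^m)^{\ell}-1 = (\xi^{\ell})^{m}-1 = 0$, whence the whole sum vanishes and $\ell^{-1}\cdot 0 = 0 = \delta_{m,0}$.

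There is no real obstacle in this argument; it is a routine identity. The only point that genuinely needs care is invoking the primitivity of $\xi$ to ensure $\xi^m\neq 1$ whenever $\ell\nmid m$, which is exactly what prevents the denominator $\xi^m-1$ from vanishing and legitimizes the geometric series formula in the second case. I would also flag that the displayed statement appears to contain a typographical mismatch between the summation index $m$ and the subscript $n$ on the Kronecker delta, and that the intended right-hand side is $\delta_{m,0}$.
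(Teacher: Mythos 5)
Your proof is correct and follows essentially the same route as the paper: the same case split on $m \bmod \ell$, with the only cosmetic difference that the paper establishes the vanishing in the second case by multiplying the sum by the nonzero factor $1-\xi^m$ and telescoping, rather than quoting the closed-form geometric series formula. You are also right that the statement's $\delta_{n,0}$ is a typo for $\delta_{m,0}$ (under the paper's convention that $\delta$ is read modulo $\ell$).
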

\begin{proof}
If $m\equiv 0 \mod \ell$, then $\xi^m=1$ in $\mathbb{C}$.
Hence, we have $\displaystyle \sum_{k=1}^{\ell} \xi^{km}=\ell$
for $m \equiv 0 \mod \ell$.
If $m\not\equiv 0 \mod \ell$, then $1-\xi^m\not=0$ in $\mathbb{C}$. Since
\[
(1-\xi^m)
\left( \sum_{k=1}^\ell \xi^{km} \right)
=\xi^m-\xi^{\ell m+m}=0,
\]
we have $\displaystyle \sum_{k=1}^{\ell} \xi^{km}=0$ for $m \not \equiv 0 \mod \ell$.
\end{proof}
%
%
\begin{lemma}
\label{lemma:lemma-2}
It holds that
\[
a_{j+k}-a_{i+k}\equiv
k(j-i) + (a_j-a_i).
\]
\end{lemma}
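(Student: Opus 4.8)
The plan is to reduce the recursion to a closed form and then verify the asserted identity by elementary algebra; in fact it holds as an equality over $\mathbb{Z}$, so the stated congruence follows at once for any modulus. Unwinding $a_0=0$, $a_i=i+a_{i-1}$ by telescoping gives the closed form
\[
a_n=\sum_{t=1}^{n}t=\frac{n(n+1)}{2},
\]
the $n$-th triangular number, which is confirmed by a one-line induction on $n$ directly from the recursion.

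Next I would substitute this closed form into both sides of the claim. Expanding and cancelling the common $k^2+k$ terms,
\[
a_{j+k}-a_{i+k}
=\frac{(j+k)(j+k+1)-(i+k)(i+k+1)}{2}
=\frac{(j-i)(j+i+1)}{2}+k(j-i).
\]
Since $a_j-a_i=\frac{(j-i)(j+i+1)}{2}$ by the same formula, the right-hand side is precisely $(a_j-a_i)+k(j-i)$, which is the assertion.

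There is no real obstacle here: the statement is an exact integer identity, and the congruence in the lemma is a formal consequence of it. The only point worth flagging is the choice of route. If one prefers to stay closer to the combinatorial meaning of the $a_i$ rather than invoke the closed form, one may instead write $a_m-a_n=\sum_{t=n+1}^{m}t$ for $m\geq n$ and apply the index shift $t\mapsto t-k$ to the summation range: the $j-i$ shifted summands each acquire an extra $k$, producing the term $k(j-i)$ directly, while the remaining summands reassemble into $a_j-a_i$. Either argument takes only a few lines.
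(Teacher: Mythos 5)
Your proof is correct. The paper's own argument is precisely the second route you sketch at the end: it unwinds the recursion $a_n=n+a_{n-1}$ inductively $k$ times, each step contributing one copy of $j-i$, so that $a_{j+k}-a_{i+k}=k(j-i)+(a_j-a_i)$ as an exact identity. Your primary route through the closed form $a_n=n(n+1)/2$ is a genuine (if minor) variation: it trades the induction for a one-line algebraic verification, and it makes explicit the useful observation that the statement is an equality over $\mathbb{Z}$, so the congruence (which in context is taken mod $\ell$, since only $\xi^{a_j}$ matters) follows for free. Both arguments are complete; the only thing to watch in your closed-form computation is that $(j-i)(j+i+1)/2$ is indeed an integer equal to $a_j-a_i$, which your own formula guarantees, so there is no gap.
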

\begin{proof}
Inductively, we have
\begin{align*}
a_{j+k}-a_{i+k}
&
=
(j+k+a_{j+k-1})-(i+k+a_{i+k-1})
\\
& =  (j-i)+(a_{j+k-1}-a_{i+k-1}) \\
& \;\; \vdots \\
& =
k(j-i)+(a_j-a_i). \qedhere
\end{align*} 
\end{proof}

Now, we define unitary matrixes $\sigma, \tau$ with determinant $1$ in $SU(\ell)$.
Let us define matrixes $S$, $T$  by
\begin{align*}
S&=\mathrm{diag}(\xi^{a_1}, \xi^{a_2}, \dots, \xi^{a_{\ell}}), \\
T&=(\xi^{a_{i+j}}).
\end{align*}
It is clear that 
$S^{-1}=\transpose{\bar{S}}=\mathrm{diag}(\xi^{-a_1}, \xi^{-a_2}, \dots, \xi^{a_{\ell}})$.
The $(i,j)$-entry of $\transpose{\bar{T}}T$ is
\[
\sum_{m,n=1}^{\ell} \xi^{-a_{i+m}} \delta_{mn} \xi^{a_{j+n}}.
\]
Put $k=m=n$. Then, the $(i,j)$-entry
of 
$\transpose{\bar{T}}T$ is
\[
\sum_{k=1}^{\ell} \xi^{-a_{i+k}}  \xi^{a_{j+k}}
=\sum_{k=1} \xi^{a_{j+k}-a_{i+k}}=\xi^{a_j-a_i} 
\sum_{k=1}^\ell \xi^{k(j-i)}=\delta_{ij} \ell \xi^{a_j-a_i}=\delta_{ij} \ell
\]
So, we have 
$\transpose{\bar{T}}T=\ell I$.
The determinants of $S$, $(\sqrt{\ell})^{-1} T$ are in 
$\{ z\in \mathbb{C} \;|\; |z|=1 \}$.
Hence,  there exist $\theta_0, \theta_1$ such that
$\det S=\exp(i \theta_0)$, $\det (\sqrt{\ell})^{-1} T=\exp(i \theta_1)$. 
Put $\mu_0=\exp(i\theta_0/\ell)$, 
$\mu_1=\exp(i\theta_1/\ell)$. 
We define $\sigma, \tau$ by $\mu_0^{-1} S$, 
$ (\mu_1\sqrt{\ell})^{-1} T$, respectively, so that
$\tau, \sigma$ are unitary matrixes with determinant $1$.

We end this section with the following proposition on the inner automorphisms
defined by $\sigma$ and $\tau$.

%
%
\begin{proposition} \label{proposition:auto}
We have
\[
\sigma^{-1} \alpha\sigma=\alpha, \; \sigma^{-1}\beta \sigma=\alpha^{-1}\beta, \; 
\tau^{-1} \alpha\tau=\alpha^{-1}\beta, \; \tau^{-1}\beta\tau=\beta^{-1}.
\]
\end{proposition}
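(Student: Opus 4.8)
The plan is to reduce each of the four identities to a conjugation by the \emph{unnormalized} matrices $S$ and $T$. Since $\sigma=\mu_0^{-1}S$ and $\tau=(\mu_1\sqrt{\ell})^{-1}T$ differ from $S$ and $T$ only by central scalar factors, those scalars cancel against their inverses under conjugation, so that $\sigma^{-1}(\,\cdot\,)\sigma=S^{-1}(\,\cdot\,)S$ and $\tau^{-1}(\,\cdot\,)\tau=T^{-1}(\,\cdot\,)T$. For the computations involving $T$ I would use the relation $\transpose{\bar{T}}T=\ell I$ proved above, which gives $T^{-1}=\ell^{-1}\transpose{\bar{T}}$, so that $T^{-1}(\,\cdot\,)T=\ell^{-1}\transpose{\bar{T}}(\,\cdot\,)T$. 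Thus it suffices to establish the four relations with $\sigma,\tau$ replaced by $S,T$.

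For the two identities involving $\sigma$, first note that $S$ and $\alpha$ are both diagonal, hence commute, so $S^{-1}\alpha S=\alpha$ is immediate. For $S^{-1}\beta S$ I would argue entrywise: conjugating $\beta=(\delta_{i,j+1})$ by the diagonal matrix $S=\mathrm{diag}(\xi^{a_1},\dots,\xi^{a_{\ell}})$ multiplies its $(i,j)$-entry by $\xi^{a_j-a_i}$, so the only nonzero entries survive at $i=j+1$, where $a_j-a_{j+1}=-(j+1)$. Comparing with $\alpha^{-1}\beta$, whose $(i,j)$-entry is $\xi^{-i}\delta_{i,j+1}=\xi^{-(j+1)}\delta_{i,j+1}$, one reads off $S^{-1}\beta S=\alpha^{-1}\beta$.

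The two identities involving $\tau$ form the main computation. For $T^{-1}\alpha T$ I would compute $(\transpose{\bar{T}}\alpha T)_{ij}=\sum_{k=1}^{\ell}\xi^{\,k+a_{k+j}-a_{i+k}}$, then apply Lemma~\ref{lemma:lemma-2} in the form $a_{k+j}-a_{i+k}=k(j-i)+(a_j-a_i)$ to rewrite the exponent as $k(j-i+1)+(a_j-a_i)$. Lemma~\ref{lemma:lemma-1} then evaluates $\sum_{k=1}^{\ell}\xi^{\,k(j-i+1)}$ to $\ell$ exactly when $i\equiv j+1\pmod{\ell}$ and to $0$ otherwise; dividing by $\ell$ gives $T^{-1}\alpha T=(\xi^{a_j-a_i}\delta_{i,j+1})=\alpha^{-1}\beta$, the same matrix obtained in the previous step. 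An entirely parallel computation, using $(\beta)_{kl}=\delta_{k,l+1}$ so that only $k=l+1$ contributes, yields $(\transpose{\bar{T}}\beta T)_{ij}=\ell\,\xi^{a_j-a_{i+1}}\delta_{j,i+1}$; since $a_j-a_{i+1}=0$ when $j=i+1$, this gives $T^{-1}\beta T=(\delta_{j,i+1})=\beta^{-1}$.

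The hard part will be purely bookkeeping rather than any conceptual difficulty: keeping all indices consistent modulo $\ell$, applying Lemma~\ref{lemma:lemma-1} with the correct argument so the resulting Kronecker delta picks out the intended sub- or super-diagonal, and checking that the scalar normalizations genuinely cancel so that the relations hold on the nose in $SU(\ell)$, not merely modulo the center. Once the reduction to conjugation by $S$ and $T$ is in place, no genuinely delicate step remains.
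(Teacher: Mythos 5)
Your proposal is correct and follows essentially the same route as the paper's proof: reduce to conjugation by the unnormalized matrices $S$ and $T$ (using $\transpose{\bar{T}}T=\ell I$), compute entrywise, and invoke Lemma~\ref{lemma:lemma-2} to linearize the exponents and Lemma~\ref{lemma:lemma-1} to collapse the resulting geometric sums to Kronecker deltas. Your explicit remark that the central scalars $\mu_0$, $\mu_1\sqrt{\ell}$ cancel under conjugation is a point the paper leaves implicit, but otherwise the computations agree step for step.
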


\begin{proof}
The first equality follows from the fact that both $\alpha, \sigma$ are diagonal matrixes.
Next, we consider the second equality.
The $(i,j)$-entry of $\sigma^{-1}\beta\sigma=\transpose{\bar{S}}\beta S$ is 
\[
\sum_{m,n=1}^{\ell} \delta_{im} \xi^{-a_i} \delta_{m,n+1} \delta_{nj} \xi^{a_j}.
\]
If $\delta_{i,m}=\delta_{m,n+1}=\delta_{nj}=1$, 
then
$i\equiv m\equiv n+1\equiv j+1 \mod \ell$.
So, the above entry is
\[
\delta_{i,j+1} \xi^{a_j-a_i}=\xi^{-i} \delta_{i,j+1},
\]
which is the $(i,j)$-entry of $\alpha^{-1} \beta$.
Next, we prove the third equality.
The $(i,j)$-entry of $\tau^{-1}\alpha \tau=(\ell^{-1}) \transpose{\bar{T}} \alpha T$ is 
\[
\ell^{-1} 
\sum_{m,n=1}^{\ell} 
\xi^{-a_{i+m}} \delta_{mn} \xi^m \xi^{a_{j+n}.}
\]
Put $m=n=k$. Then the $(i,j)$-entry above is 
\[
\ell^{-1} \sum_{k=1}^{\ell} \xi^{-a_{j+k}} \xi^k \xi^{a_{j+k}}=
\ell^{-1} \sum_{k=1}^{\ell} \xi^{a_{j+k}-a_{i+k}+k}=\xi^{a_j-a_i}{\ell}^{-1}
\sum_{k=1}^{\ell} \xi^{k(j-i+1)}=\delta_{i,j+1} \xi^{a_j-a_i}.
\]
So, as in the proof of the second equality, 
it is equal to the $(i,j)$-entry of $\alpha^{-1} \beta$.

Finally we prove the fourth equality. The $(i,j)$-entry of $\tau^{-1} \beta \tau$
is
\[
\ell^{-1}\sum_{m,n=1}^{\ell} \xi^{-a_{i+m}} \delta_{m,n+1} \xi^{a_{j+n}}.
\]
Put $m=n+1=k$. Then 
the $(i,j)$-entry above is equal to
\[
\ell^{-1} \sum_{k=1}^{\ell} \xi^{a_{j+k-1}-a_{i+k}}
={\xi^{a_{j-1}-a_i}}{\ell}^{-1}
\sum_{k=1}^{\ell} 
\xi^{k(j-1-i)}=\delta_{i,j-1} \xi^{a_{j-1}-a_i}=\delta_{i,j-1}.
\]
Hence, we have $\tau^{-1}\beta \tau=\beta^{-1}$.
\end{proof}

Thus, the matrix representing the inner automorphisms defined by  $\sigma$, $\tau$ are given by
\[
(\alpha, \beta)^{\sigma}
=(\alpha, \beta) \begin{pmatrix} 1  & -1   \\   0  &  1 \end{pmatrix}, \quad
(\alpha, \beta)^{\tau}
=(\alpha, \beta) \begin{pmatrix} -1  & 0   \\   1  &  -1 \end{pmatrix},
\]
respectively. It is clear that \[
\begin{pmatrix} 1  & -1   \\   0  &  1 \end{pmatrix}^{\ell-1}
=\begin{pmatrix} 1  & 1   \\   0  &  1 \end{pmatrix}, 
\quad 
\begin{pmatrix} -1  & 0   \\   1  &  -1 \end{pmatrix}^{\ell-1}
=\begin{pmatrix} 1  & 0   \\   1  &  1 \end{pmatrix}
\]
and 
these matrixes generate the special linear group $SL_2(\mathbb{Z}/\ell)$.


\section{The elementary abelian $\ell$-subgroup $A_3$ of $G_1$}

For an odd prime number $\ell$, we define the connected Lie group $G_1$, its elementary abelian $\ell$-subgroup $A_3$ and the subgroup $W$ of the Weyl group of $A_3$, we mentioned in the 
introduction.
We denote by $\iota:A_3\to G_1$ the inclusion of $A_3$ and
we consider the induced homomorphism 
\[
\iota^*:H^{*}(BG_1;\mathbb{Z}/\ell) \to H^{*}(BA_3;\mathbb{Z}/\ell)^W
\]
As in the previous section, let $I$ be the identity matrix in $SU(\ell)$, 
 $\xi=\exp(2\pi i/\ell)$ in $\mathbb{C}$ and by abuse of notation, we 
denote by $\xi$ the matrix $\xi I$.
We define the connected Lie group $G_1$ by 
\begin{align*}
\Gone&=SU(\ell)\times SU(\ell)/\langle \Delta(\xi)\rangle, 
\end{align*}
where 
\[
\Delta:SU(\ell) \to SU(\ell) \times SU(\ell)
\]
is the diagonal map sending $Y \in SU(\ell)$ to $
\displaystyle \left(\begin{array}{cc} Y & 0 \\ 0 & Y \end{array}
\right) \in SU(\ell)\times SU(\ell)$.
We also consider a map
\[
\Gamma:SU(\ell)\to SU(\ell)\times SU(\ell)
\]
 sending $Y \in SU(\ell)$ to $\displaystyle
\left( \begin{array}{cc} I & 0 \\  0 & Y
\end{array}
\right) \in SU(\ell)\times SU(\ell)$.
We define $A_3$ to be
\[
A_3=\langle 
\Delta(\alpha), \Delta(\beta), \Delta(\xi), \Gamma(\xi) \rangle /\langle 
\Delta(\xi) \rangle.
\]
It is easy to see that
\begin{align*}
[\Delta(\alpha), \Delta(\beta)]&=\Delta(\xi) , 
\\
[\Gamma(\xi), \Delta(\alpha)]&=\Delta(I), \\
[\Gamma(\xi), \Delta(\beta)]&=\Delta(I)
\end{align*}
 in $SU(\ell)\times SU(\ell)$.
Therefore, 
$A_3$ is an elementary abelian $\ell$-subgroup of $\Gone$.

Next, we consider inner automorphisms of $G_1$ preserving $A_3$.
By Proposition~\ref{proposition:auto}, matrixes corresponding to the inner automorphisms defined by $\Delta(\sigma)$, 
$\Delta(\tau)$ 
are given as follows:
\[
(\Delta(\alpha), \Delta(\beta), \Gamma(\xi))^{\Delta(\sigma)}=
(\Delta(\alpha), \Delta(\beta), \Gamma(\xi))
\begin{pmatrix}
1 & -1 & 0 \\ 
0 & 1 & 0 \\
0 & 0 & 1  \end{pmatrix}
,
\]
\[
(\Delta(\alpha), \Delta(\beta), \Gamma(\xi))^{\Delta(\tau)}=
(\Delta(\alpha), \Delta(\beta), \Gamma(\xi))\begin{pmatrix}
-1 & 0 & 0 \\ 
1 & -1 & 0 \\
0 & 0 & 1 \end{pmatrix}.
\]
By direct computation, it is also easy to verify that 
 \begin{align*}
\Delta(\alpha)^{\Gamma(\beta)}&= \Gamma(\xi) \Delta(\alpha), \\
\Delta(\beta)^{\Gamma(\beta)}&=\Delta(\beta), \\
\Gamma(\xi)^{\Gamma(\beta)}&=\Gamma(\xi).
\end{align*}
So, the matrix corresponding to the inner automorphism defined by $\Gamma(\beta)$ is 
given by
\[
(\Delta(\alpha), \Delta(\beta), \Gamma(\xi))^{\Gamma(\xi)} =
(\Delta(\alpha), \Delta(\beta), \Gamma(\xi))
\begin{pmatrix} 1 & 0 & 0 \\ 0 & 1 & 0 \\ 1 & 0 & 1 \end{pmatrix}.
\]
Thus, the inner automorphisms defined by $\Delta(\sigma)^{\ell-1}$, 
$\Delta(\tau)^{\ell-1}$, 
 $\Gamma(\beta)$ correspond to matrixes
\[
\begin{pmatrix}
1 & 1 & 0 \\ 
0 & 1 & 0 \\
0 & 0 & 1 \end{pmatrix}, \;
\begin{pmatrix}
1 & 0 & 0 \\ 
1 & 1 & 0 \\
0 & 0 &  1 \end{pmatrix}, \;
\begin{pmatrix}
1 & 0 & 0 \\ 
0 & 1 & 0 \\
1 & 0 &  1 \end{pmatrix},
\]
and  they generate the following subgroup $W$ of the Weyl group.
\[
W=\left. \left\{ \begin{pmatrix} a & b & 0 \\ c & d & 0 \\ * &  0 & 1 \end{pmatrix}
 \; \right |\; ad-bc=1
\right\}.
\]

Finally, we compute the set of invariants  $H^4(BA_3;\mathbb{Z}/\ell)^W$
by direct calculation. 
%
%
\begin{proposition}
\label{proposition:odd-invariants}
For an odd prime number $\ell$, we have
\[
H^4(BA_3;\mathbb{Z}/\ell)^W=\mathbb{Z}/\ell\{ Q_0(x_1y_1z_1)\}.
\]
Moreover, we have
\[
Q_1(Q_0(x_1y_1z_1))=-x_2^{\ell} y_2 z_1+x_2^\ell y_1z_2+ 
x_2y_2^\ell z_1-x_2y_1z_2-x_1y_2^\ell z_2+x_1 y_2 z_2^\ell \not=0
.
\]
\end{proposition}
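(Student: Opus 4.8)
The plan is to determine $H^4(BA_3;\mathbb{Z}/\ell)^W$ by splitting the computation across the semidirect-product structure of $W$, and then to obtain the final identity by a Leibniz calculation with $Q_1$. First I would record the $W$-action on the generators. Since $Q_0$ is a stable operation it commutes with the $W$-action, so $x_2=Q_0x_1$, $y_2=Q_0y_1$, $z_2=Q_0z_1$ transform exactly like $x_1,y_1,z_1$. Reading the action off the three matrices generating $W$, the pairs $\langle x_1,y_1\rangle$ and $\langle x_2,y_2\rangle$ are standard $2$-dimensional $SL_2(\mathbb{Z}/\ell)$-modules, $z_1$ and $z_2$ are $SL_2(\mathbb{Z}/\ell)$-fixed, and the unipotent part $U$ of $W$ acts by $z_1\mapsto z_1+sx_1+ty_1$ and $z_2\mapsto z_2+sx_2+ty_2$ while fixing $x_1,y_1,x_2,y_2$. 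As $U$ is normal with $W/U\cong SL_2(\mathbb{Z}/\ell)$, I would compute $H^4(BA_3;\mathbb{Z}/\ell)^W=(H^4(BA_3;\mathbb{Z}/\ell)^U)^{SL_2(\mathbb{Z}/\ell)}$ in two stages.

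For the $U$-stage I would decompose the $15$-dimensional space $H^4(BA_3;\mathbb{Z}/\ell)$ into the two $W$-stable summands: the degree-$4$ part of $\mathbb{Z}/\ell[x_2,y_2,z_2]$ (dimension $6$), and the span of the products $w_2\eta$ with $w_2\in\{x_2,y_2,z_2\}$ and $\eta\in\{x_1y_1,x_1z_1,y_1z_1\}$ (dimension $9$). Evaluating the two generators $z_i\mapsto z_i+x_i$ and $z_i\mapsto z_i+y_i$ of $U$ on the monomial basis and solving the resulting linear system (using $x_1^2=y_1^2=0$ and $ad-bc=1$), I expect the $U$-invariants of the first summand to be $\mathbb{Z}/\ell\{x_2^2,x_2y_2,y_2^2\}=\mathrm{Sym}^2\langle x_2,y_2\rangle$, and those of the second summand to be spanned by $x_2x_1y_1$, $y_2x_1y_1$ and the combination $z_2x_1y_1-y_2x_1z_1+x_2y_1z_1$, which is precisely $Q_0(x_1y_1z_1)$.

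For the $SL_2(\mathbb{Z}/\ell)$-stage I would note that $\mathrm{Sym}^2\langle x_2,y_2\rangle$ is the symmetric square of the standard module and carries no nonzero invariant (equivalently, $SL_2(\mathbb{Z}/\ell)$ admits no invariant symmetric bilinear form on its standard module, only the alternating determinant); likewise $\langle x_2x_1y_1,y_2x_1y_1\rangle$ is a standard module, since $x_1y_1$ is $SL_2(\mathbb{Z}/\ell)$-fixed, and has no invariant. The surviving class $Q_0(x_1y_1z_1)$ is genuinely $W$-invariant: the product $x_1y_1z_1$ is fixed because $(ax_1+by_1)(cx_1+dy_1)=(ad-bc)x_1y_1=x_1y_1$ and the translation $z_1\mapsto z_1+sx_1+ty_1$ only adds terms annihilated by $x_1y_1$, and $Q_0$ commutes with the action. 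This yields $H^4(BA_3;\mathbb{Z}/\ell)^W=\mathbb{Z}/\ell\{Q_0(x_1y_1z_1)\}$.

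For the final identity I would use that $Q_1$ is an odd derivation with $Q_1x_1=x_2^\ell$, $Q_1y_1=y_2^\ell$, $Q_1z_1=z_2^\ell$ and $Q_1x_2=Q_1y_2=Q_1z_2=0$ (the last from $Q_0Q_1=-Q_1Q_0$ together with $Q_0x_2=0$). Applying the Leibniz rule to $Q_0(x_1y_1z_1)=x_2y_1z_1-x_1y_2z_1+x_1y_1z_2$ then produces the stated six-term expression, whose six monomials are pairwise distinct basis elements of $\mathbb{Z}/\ell[x_2,y_2,z_2]\otimes\Lambda(x_1,y_1,z_1)$ with coefficients $\pm1$, so it is nonzero. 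I expect the main obstacle to be the sign bookkeeping in the exterior factors when solving for the $U$-invariants of the mixed summand; the $SL_2(\mathbb{Z}/\ell)$-stage then reduces to the absence of a quadratic invariant of the standard module, a point that must be checked to persist for the small primes $\ell=3,5$ as well.
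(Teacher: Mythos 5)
Your argument is correct, and it reaches the answer by a genuinely different route from the paper. The paper first restricts to the block-diagonal subgroup $W_0\cong SL_2(\mathbb{Z}/\ell)$ and quotes the Dickson--M\`ui description of $(\mathbb{Z}/\ell[x_2,y_2]\otimes\Lambda(x_1,y_1))^{SL_2(\mathbb{Z}/\ell)}$ to get the three-dimensional space $H^4(BA_3;\mathbb{Z}/\ell)^{W_0}=\mathbb{Z}/\ell\{z_2^2,\,x_1y_1z_2,\,(x_2y_1-x_1y_2)z_1\}$, and then intersects with the kernel of $1-f^*$ for the single extra transvection $f$; you instead do the unipotent part $U$ first by direct linear algebra on the $6+9$-dimensional decomposition (your claimed $U$-invariants check out: $\mathrm{Sym}^2\langle x_2,y_2\rangle$ plus $x_2x_1y_1$, $y_2x_1y_1$ and $z_2x_1y_1-y_2x_1z_1+x_2y_1z_1=Q_0(x_1y_1z_1)$), and then kill the nontrivial $SL_2$-summands by representation theory. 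Your order of operations is the more natural one group-theoretically (you correctly treat $U$ as the two-parameter unipotent normal subgroup generated by $f$ and its $SL_2$-conjugates, whereas the paper only ever needs the single generator $f$ because it has already passed to $W_0$-invariants), and it avoids quoting M\`ui's theorem; the price is a larger explicit computation and the need to verify, as you flag, that $\mathrm{Sym}^2$ of the standard module has no invariant for every odd $\ell$ including $\ell=3$ --- this is cleanest via the fact that the $SL_2(\mathbb{F}_\ell)$-invariants of $\mathbb{F}_\ell[x_2,y_2]$ are polynomial on generators of polynomial degrees $\ell+1$ and $\ell^2-\ell$, both exceeding $2$. Two small remarks: your Leibniz computation of $Q_1Q_0(x_1y_1z_1)$ gives $-x_2^{\ell}y_2z_1+x_2^{\ell}y_1z_2+x_2y_2^{\ell}z_1-x_2y_1z_2^{\ell}-x_1y_2^{\ell}z_2+x_1y_2z_2^{\ell}$, so the fourth term in the displayed statement is missing an exponent $\ell$ (as degree $2\ell+3$ forces); and note that the displayed description of $W$ in the paper, with vanishing $(3,2)$-entry, is not literally closed under multiplication --- the group generated by the three listed matrices is the one with arbitrary bottom row $(\,{*}\;{*}\;1\,)$, which is exactly the group you work with, so the invariant computation is unaffected.
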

\begin{proof}
Let $x_1, y_1, z_1$ be generators of $H^1(BA_3;\mathbb{Z}/\ell)$
corresponding to $\Delta(\alpha), \Delta(\beta), \Gamma(\xi)$ in $A_3$, respectively.
Let  
\[
W_0 = \left. \left\{ \begin{pmatrix} a & b & 0 \\ c & d & 0 \\
0 & 0 & 1 \end{pmatrix}
\;
\right|  
\;ad-bc =1 \right\}.
\]
For an odd prime number $\ell$, the ring of invariants of 
the polynomial tensor exterior algebra 
$\mathbb{Z}/\ell[x_2, y_2]\otimes \Lambda(x_1,y_1)$
with respect to the action of the finite special linear group 
$SL_2(\mathbb{Z}/\ell)$  is known as 
Dickson-Mui invariants. We refer the reader to 
Mui \cite{mui-1975} or Kameko and Mimura \cite{kameko-mimura-2007} 
for the details of the Dickson-Mui invariants. 
It is equal to 
\[
\mathbb{Z}/\ell[u_{2\ell+2}, u_{2\ell^2-2\ell}]
\{ 1, x_1y_1, Q_0(x_1y_1), Q_1(x_1y_1)\},
\]
where 
\begin{align*}
u_{2\ell+2}&=Q_1Q_0(x_1y_1)=x_2y_2^\ell-x_2^\ell y_2, 
\\
u_{2\ell^2-2\ell}&=Q_2 Q_0(x_1y_1)/Q_1Q_0(x_1y_1)=\sum_{k=0}^{\ell}
x_2^{k(\ell-1)} y_2^{(\ell-k)(\ell-1)},
\\
Q_0(x_1y_1)& =x_2y_1-x_1y_2,\\
Q_1(x_1y_1)& =x_2^{\ell}y_1-x_1y_2^{\ell}.
\end{align*}
Therefore, the set of invariants 
$H^4(BA_3;\mathbb{Z}/\ell)^{W_0}$ is spanned by
$z_2^2$, $x_1y_1 z_2$, $(x_2y_1-x_1y_2)z_1$ as a $\mathbb{Z}/\ell$ vector space.
Let $f:A_3 \to A_3$ be the inner automorphism corresponding to 
\[
\begin{pmatrix} 1 & 0 & 0 \\ 0 & 1 & 0 \\ 1 & 0 & 1
\end{pmatrix}.
\]
Then, the induced homomorphism $f^*:H^{*}(BA_3;\mathbb{Z}/\ell) \to 
H^{*}(BA_3;\mathbb{Z}/\ell)$ maps $x_i, y_i, z_i$ to
$x_i, y_i, x_i+z_i$, respectively. Thus, we have
\begin{align*}
(1-f^*)(z_2^2)&=-x_2^2, \\
(1-f^*)(x_1y_1z_2)&= x_1x_2y_1, \\
(1-f^*)((x_2y_1-x_1y_2)z_1)&=-x_1x_2y_1.
\end{align*}
Hence, the kernel of 
\[
(1-f^*):H^{4}(BA_3;\mathbb{Z}/\ell)^{W_0}\to
H^{4}(BA_3;\mathbb{Z}/\ell)
\]
is spanned by 
\[
x_1y_1z_2+(x_2y_1-x_1y_2)z_1=Q_0(x_1y_1z_1). 
\]
Since $W_0$ and $f$ generated the subgroup $W$ of the Weyl group, the kernel of the induced homomorphism $1-f^*$ above is the ring of invariants $H^{*}(BA_3;\mathbb{Z}/\ell)^W$. 
This completes the proof of Proposition~\ref{proposition:odd-invariants}.
\end{proof}


\section{The mod $\ell$ cohomology of $BPU(\ell)$ up to degree $6$}

We recall the following Proposition~\ref{proposition:pu}
 on the mod $\ell$ cohomology of $BPU(\ell)$. 
The mod $\ell$ cohomology of the classifying space $BPU(\ell)$ was computed by Kono, 
Mimura and Shimada in \cite{kono-mimura-shimada-1975} for $\ell=3$.
For all odd prime numbers, computation was done 
by Kameko and Yagita in \cite{kameko-yagita-2008} and 
by Vistoli in \cite{vistoli-2007}, independently. 
However, what we need in this paper is the computation 
up to degree $6$ only. So, we compute the mod $\ell$ cohomology of $BPU(\ell)$ up to degree $6$
instead of referring the reader to \cite{kameko-yagita-2008} or \cite{vistoli-2007}.

We say the spectral sequence $E_r^{p,q}$ collapses at the 
$E_m$-level up to degree $n$ if 
$E_m^{p,q}=E_{m+1}^{p,q}=\cdots=E_{\infty}^{p,q}$ for $p+q\leq n$.
We say $M$ is a free $R$-module up to degree $n$ if there exists 
a free $R$-module $M_0$ and an $R$-module homomorphism
$f:M_0\to M$ such that $f:M_0^{p,q}\to M^{p,q}$ is an isomorphism for all $p+q\leq n$.

%
%
\begin{proposition}
\label{proposition:pu}
For an odd prime number $\ell$, 
$H^{*}(BPU(\ell);\mathbb{Z}/\ell)$ is spanned by $1, v_2, v_2^2, v_2^3, v_3$ up to 
degree $6$ as a graded $\mathbb{Z}/\ell$-module, 
where $v_2, v_3$ are of degree $2$, $3$, respectively.
In particular, $v_2v_3=0$.
Moreover, the induced homomorphism 
\[
\iota^*:H^2(BPU(\ell);\mathbb{Z}/\ell) \to 
H^2(BA_2;\mathbb{Z}/\ell)^{SL_2(\mathbb{Z}/\ell)}
\] 
is an isomorphism.
\end{proposition}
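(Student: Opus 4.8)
The plan is to run the mod $\ell$ Leray--Serre spectral sequence of the fibration $BSU(\ell)\to BPU(\ell)\to K(\mathbb{Z}/\ell,2)$ coming from the central extension $1\to\mathbb{Z}/\ell\to SU(\ell)\to PU(\ell)\to1$. The fibre contributes $H^*(BSU(\ell);\mathbb{Z}/\ell)=\mathbb{Z}/\ell[c_2,\dots,c_\ell]$, spanned in degrees $\le6$ by $1$, $c_2$ (degree $4$) and $c_3$ (degree $6$); the base contributes $H^*(K(\mathbb{Z}/\ell,2);\mathbb{Z}/\ell)$, which in degrees $\le6$ is the free graded-commutative algebra on $\iota_2$ and $Q_0\iota_2$, since the next generator $Q_1\iota_2$ sits in degree $2\ell+1>6$ and $P^1\iota_2=\iota_2^{\,\ell}$ is decomposable (this handles the borderline case $\ell=3$). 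Thus the bottom row $E_2^{*,0}$ in degrees $\le6$ is spanned by $1,\iota_2,Q_0\iota_2,\iota_2^2,\iota_2\,Q_0\iota_2,\iota_2^3$. I set $v_2$ to be the image of $\iota_2$ under the edge homomorphism and $v_3:=Q_0 v_2$, the image of $Q_0\iota_2$. Since the fibre has no cohomology in degrees $1,2,3$, no bottom-row class can be hit before total degree $5$, so $\iota_2,Q_0\iota_2,\iota_2^2,\iota_2^3$ survive, giving $v_2,v_3,v_2^2,v_2^3$ and settling degrees $\le3$ at once.

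Everything then reduces to showing that the fibre classes $c_2$ and $c_3$ do not survive, i.e.\ do not lie in the image of the fibre-restriction (edge) map $H^*(BPU(\ell);\mathbb{Z}/\ell)\to H^*(BSU(\ell);\mathbb{Z}/\ell)$. For $c_2$ the only possible differential is the transgression $d_5(c_2)=\lambda\,\iota_2\,Q_0\iota_2$, so $c_2$ dying forces $\lambda\ne0$, which simultaneously kills $\iota_2\,Q_0\iota_2$; thus $H^4=\langle v_2^2\rangle$ and $H^5=0$, and the vanishing of $H^5$ gives the relation $v_2v_3=0$ for free. In degree $6$ the only classes off the bottom row are $\iota_2c_2\in E^{2,4}$ and $c_3\in E^{0,6}$; the former is killed by $d_5(\iota_2c_2)=\lambda\,\iota_2^2\,Q_0\iota_2\ne0$ (as $\iota_2$ is a permanent cycle), and the latter does not survive either. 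Hence $H^6=\langle v_2^3\rangle$, completing the additive description.

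I expect the main obstacle to be proving that $c_2$ and $c_3$ are not in the image of the fibre restriction. Equivalently, passing to integral coefficients, the restriction $H^{2i}(BPU(\ell);\mathbb{Z})\to H^{2i}(BSU(\ell);\mathbb{Z})$ should have image of index divisible by $\ell$ on the generators $c_2$ (for $i=2$) and $c_3$ (for $i=3$); here $d_5(c_2)$ lands in $H^5(K(\mathbb{Z}/\ell,2);\mathbb{Z})=\mathbb{Z}/\ell$ and the analogous obstruction for $c_3$ involves the order-$\ell$ generator of $H^3(K(\mathbb{Z}/\ell,2);\mathbb{Z})$. I would deduce this from representation theory: every representation of $PU(\ell)$ is one of $SU(\ell)$ on which the centre $\mathbb{Z}/\ell$ acts trivially, so all its weights lie in the root lattice, and expressing $c_2,c_3$ as symmetric functions of the weights shows they restrict to $SU(\ell)$ as $\ell$-divisible multiples of the generators — for instance the endomorphism representation $\mathbb{C}^\ell\otimes\overline{\mathbb{C}^\ell}$ has second Chern class restricting to $2\ell\,c_2$.

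For the statement about $\iota^*$ on $H^2$, the spectral sequence already gives $H^2(BPU(\ell);\mathbb{Z}/\ell)=\langle v_2\rangle$. Writing $x_1,y_1\in H^1(BA_2;\mathbb{Z}/\ell)$ for the classes dual to $\alpha,\beta$ and $x_2=Q_0x_1,\ y_2=Q_0y_1$, a direct check (or the Dickson--Mui computation) gives $H^2(BA_2;\mathbb{Z}/\ell)^{SL_2(\mathbb{Z}/\ell)}=\langle x_1y_1\rangle$: the pair $(x_2,y_2)$ spans the standard module, which has no nonzero invariants, while $x_1y_1$ is the invariant symplectic form. It then remains to see $\iota^*v_2\neq0$. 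Since $v_2$ is the pullback of $\iota_2$ along the map $BPU(\ell)\to K(\mathbb{Z}/\ell,2)$ classifying the central extension, $\iota^*v_2$ is the class of the restricted extension $1\to\mathbb{Z}/\ell\to\langle\alpha,\beta,\xi\rangle\to A_2\to1$; by the commutator relation $[\alpha,\beta]=\xi$ of \S2 this class equals $\pm x_1y_1$, so $\iota^*\colon H^2(BPU(\ell);\mathbb{Z}/\ell)\to H^2(BA_2;\mathbb{Z}/\ell)^{SL_2(\mathbb{Z}/\ell)}$ is an isomorphism.
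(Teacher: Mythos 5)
Your route is genuinely different from the paper's: you run the mod $\ell$ Leray--Serre spectral sequence of $BSU(\ell)\to BPU(\ell)\to K(\mathbb{Z}/\ell,2)$ coming from the central extension, whereas the paper uses the fibration $BU(\ell)\to BPU(\ell)\to K(\mathbb{Z},3)$. Your bookkeeping is correct: the description of the $E_2$-term up to total degree $6$, the reduction of everything to the non-survival of $c_2$ and $c_3$, and the observation that $d_5(c_2)\neq 0$ kills $\iota_2 Q_0\iota_2$ and hence gives $H^5=0$ and $v_2v_3=0$ for free. The final paragraph on $\iota^*$ in degree $2$, identifying $\iota^*v_2$ with the extension class of $\ell^{1+2}_{+}\to A_2$ and using $[\alpha,\beta]=\xi$, is essentially the paper's own argument.

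The gap is exactly at the step you yourself flag as the main obstacle, and the proposed fix does not close it. The representation-theoretic argument shows that for every representation $V$ of $PU(\ell)$ the class $c_2(V)$ restricts to an $\ell$-divisible multiple of $c_2$ on $BSU(\ell)$; but to conclude that $c_2$ is not in the image of $j^*:H^4(BPU(\ell);\mathbb{Z}/\ell)\to H^4(BSU(\ell);\mathbb{Z}/\ell)$ you would need $H^4(BPU(\ell);\mathbb{Z}/\ell)$ to be spanned by mod $\ell$ reductions of Chern classes of $PU(\ell)$-representations. That is not automatic: $H^4(BG;\mathbb{Z})$ of a compact connected group need not be generated by Chern classes (for $E_8$ the smallest Chern-class multiple of the generator is $60$), and with $\mathbb{Z}/\ell$ coefficients there is in addition a potential $\mathrm{Tor}$ contribution from $\ell$-torsion in $H^5(BPU(\ell);\mathbb{Z})$ that is not the reduction of any integral class --- and whether $H^4(BPU(\ell);\mathbb{Z}/\ell)$ is one- or two-dimensional is precisely what is being decided, so the known answer cannot be quoted. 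For $c_3$ the situation is worse: ``the latter does not survive either'' is asserted without argument, while a priori $c_3$ could die under $d_3$ into $Q_0\iota_2\cdot c_2$, die under $d_7$, or survive, and only the last possibility needs to be excluded. The device the paper uses at the analogous point --- naturality of the cohomology suspension together with Baum and Browder's computation that the primitives of $H^{*}(PU(\ell);\mathbb{Z}/\ell)$ live only in degrees $1$ and $2$, so that every class in $H^{\geq 4}(BPU(\ell);\mathbb{Z}/\ell)$ suspends to zero whereas $c_2$ and $c_3$ suspend to nonzero primitives of $H^{*}(SU(\ell);\mathbb{Z}/\ell)$ --- transplants verbatim to your fibration and would close the gap.
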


\begin{proof}
Let us consider the Leray-Serre spectral sequence 
associated with the fibre sequence
\[
BU(\ell) \stackrel{j}{\longrightarrow} 
BPU(\ell)\stackrel{\varphi}{\longrightarrow} 
K(\mathbb{Z},3).
\]
First, we describe its $E_2$-term 
\[
H^{*}(K(\mathbb{Z}, 3);\mathbb{Z}/\ell) \otimes H^{*}(BU(\ell);\mathbb{Z}/\ell).
\]
We denote by $u_3, u_{2\ell+1}$ the algebra generators 
of the mod $\ell$ cohomology of the 
Eilenberg-MacLane space $K(\mathbb{Z}, 3)$ up to degree $2\ell+1$, so that 
\[
H^{*}(K(\mathbb{Z}, 3);\mathbb{Z}/\ell)=\mathbb{Z}/\ell\{1, u_3, u_{2\ell+1}\}
\]
as a graded vectors space.
We denote  algebra generators 
of the mod $\ell$ cohomology of $BU(\ell)$ by $y_2, \dots, y_{2\ell}$
where $\deg y_i=i$.
The mod $\ell$ cohomology of $BU(\ell)$ is a polynomial algebra 
\[
\mathbb{Z}/\ell[y_2, \dots, y_{2\ell}].
\]
The $E_2$-term is, up to degree $7$, 
spanned by 
\[
1, y_2, y_2^2, y_2^3, y_4, y_2y_4, y_6;  u_3, y_2u_3, y_2^2 u_3, y_4u_3,
\]
for $\ell>3$, and 
by 
\[
1, y_2, y_2^2, y_2^3, y_4, y_2y_4, y_6;  u_3, y_2u_3, y_2^2 u_3, y_4u_3; u_7
\]
for $\ell=3$.

Next, we consider differentials.
The image of the cohomology suspension 
\[
\sigma:H^{*}(X;\mathbb{Z}/\ell) \to H^{*}(\Omega X;\mathbb{Z}/\ell)
\]
is contained in the set of primitive elements.
For $X=BU(\ell)$,   
\[
H^{*}(\Omega X;\mathbb{Z}/\ell)=H^{*}(U(\ell);\mathbb{Z}/\ell)=
\Lambda(\sigma(y_2), \dots, \sigma(y_{2\ell})).
\]
On the other hand, 
\[
H^{*}(PU(\ell);\mathbb{Z}/\ell)
=\mathbb{Z}/\ell[x_2]/(x_2^\ell) \otimes 
\Lambda(x_1, x_3, \cdots, x_{2i-1}, \dots, x_{2\ell-1})
\]
the subspace spanned by the primitive elements are spanned by $1, x_1,x_2$.
See Baum and Browder \cite{baum-browder-1965}
 for the details of the mod $\ell$ cohomology of $PU(\ell)$.
 So, the cohomology suspension maps  any elements of degree greater than 
 $3$ in the mod $\ell$  cohomology of $BPU(\ell)$ to zero.
Consider elements
$y_4$, 
$y_6+\alpha y_2 y_4$, 
in $H^{*}(BU(\ell);\mathbb{Z}/\ell)=E_2^{0,*}$,
where $\alpha \in \mathbb{Z}/\ell$.
Then, 
$\sigma(y_4) \not=0$ and 
$\sigma(y_6+\alpha y_2 y_4)=\sigma(y_6) \not =0$ in 
$H^{*}(U(\ell);\mathbb{Z}/\ell)$.
Hence, these elements $y_4$, 
$y_6+\alpha y_2 y_4$
in $H^{*}(BU(\ell);\mathbb{Z}/\ell)$ are  not in the image of 
the induced homomorphism
\[
j^*:H^{*}(BPU(\ell);\mathbb{Z}/\ell) \to H^{*}(BU(\ell);\mathbb{Z}/\ell).
\]
Therefore, in the Leray-Serre spectral sequence, 
the elements $y_4$, $y_6+\alpha y_2y_4$ in $E_2^{0,*}$ must support nontrivial differentials.
Since $y_4$ supports non-trivial differential, it must be 
$d_3(y_4)=\alpha' y_2u_3$ for some $\alpha'\not=0$ in $\mathbb{Z}/\ell$.
Suppose that $d_3(y_6)=  \beta' y_4 u_3+ \gamma' y_2^2 u_3$.
If $\beta'\not=0$, 
the image of the differential $d_3$ is spanned by 
$y_2u_3$ up to degree $6$ and 
the kernel of $d_3$ is spanned by 
$1, y_2, y_2^2, y_2^3, u_3, y_2u_3$ up to degree $6$.
Hence, the $E_4$-term is spanned by $1, y_2, y_2^2, u_3$ up to degree $6$.
It is clear that for dimensional reasons, these elements are permanent cocycles, 
so that $E_3^{p,q}=E_\infty^{p,q}$ for $p+q\leq 6$.
If $\beta'=0$, then
the image of $d_3$ is spanned by $y_2u_3$ and 
the kernel of $d_3$ is spanned by
$1, y_2, y_2^2, y_2^3, u_3, y_2u_3, y_6-(\gamma'/\alpha')y_2y_4$
up to degree $6$.
Hence, the $E_4$-term is spanned by 
$1, y_2, y_2^2, y_2^3, u_3, y_6-(\gamma'/\alpha')y_2y_4$ up
to degree $6$.
However, $y_6-(\gamma'/\alpha')y_2y_4$ does not survive to the 
$E_\infty$-term and $1, y_2, y_2^2, y_2^3, u_3$ are permanent cocycles, and so 
the $E_\infty$-term is spanned by $1, y_2, y_2^2, y_2^3, u_3$, up to degree $6$, anyway.
(The fact is that the above $\beta'$ is always non-zero although
 we do not give a proof here.)

Finally, we consider the induced homomorphism
\[
\iota^*: H^2(BPU(\ell);\mathbb{Z}/\ell) \to 
H^2(BA_2;\mathbb{Z}/\ell)^{SL_2(\mathbb{Z}/\ell)}.
\] 
Consider the commutative diagram of groups:
\[
\begin{diagram}
\node{\ell_{+}^{1+2}} \arrow{e,t}{\iota}\arrow{s,l}{\pi}
 \node{SU(\ell)} \arrow{s,r}{\pi} \\
\node{A_2} \arrow{e,t}{\iota} \node{PU(\ell),}
\end{diagram}
\]
where $\pi$ is the obvious projection and $\ell^{1+2}_{+}$ is the 
subgroup of $SU(\ell)$ generated by $\alpha, \beta, \xi$.
As a group, $\ell^{1+2}_{+}$ is the extra-special $\ell$-group 
of order $\ell^3$ with exponent $\ell$.
The group extension 
\[
\mathbb{Z}/\ell  \longrightarrow \ell^{1+2}_{+} \stackrel{\pi}{\longrightarrow} A_2
\]
is not trivial
and 
the induced map $\varphi' \circ \iota: BA_2\to K(\mathbb{Z}/\ell, 2)$ 
is not null-homotopic, where 
$\mathbb{Z}/\ell$
is the cyclic group of order $\ell$ generated by $\xi$.
Since $\varphi' \circ \iota$ represents a nontrivial element in 
$H^2(BA_2;\mathbb{Z}/\ell)^{SL_2(\mathbb{Z}/\ell)}=\mathbb{Z}/\ell$, 
the induced homomorphism 
\[
H^2(BPU(\ell);\mathbb{Z}/\ell) \to H^2(BA_2;\mathbb{Z}/\ell)^{SL_2(\mathbb{Z}/\ell)}\]
is an isomorphism.
\end{proof}


\section{The Leray-Serre spectral sequences}

In this section, we prove Proposition~\ref{proposition:projective}.
To this end, we prove Proposition~\ref{proposition:bg1} below.
%
%
\begin{proposition}
\label{proposition:bg1}
The $4$-th mod $\ell$ cohomology of $B\Gone$ as a vector space over 
$\mathbb{Z}/\ell$ is given as follows:
\[
H^4(B\Gone;\mathbb{Z}/\ell)=\mathbb{Z}/\ell
\oplus \mathbb{Z}/\ell.
\]
Moreover, the induced homomorphism 
\[
\iota^*:H^4(B\Gone;\mathbb{Z}/\ell) \to H^4(BA_3;\mathbb{Z}/\ell)^{W}
\]
is an epimorphism.
\end{proposition}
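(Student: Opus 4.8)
The plan is to read off both statements from two fibrations coming from the structure of $\Gone=SU(\ell)\times SU(\ell)/\langle\Delta(\xi)\rangle$. First, the projection onto the first factor, $[(X,Y)]\mapsto[X]$, is a surjection $\Gone\to PU(\ell)$ whose kernel $(\langle\xi\rangle\times SU(\ell))/\langle\Delta(\xi)\rangle$ is isomorphic to $SU(\ell)$, giving a fibration
\[
BSU(\ell)\to B\Gone\to BPU(\ell).
\]
Second, the full projection $[(X,Y)]\mapsto([X],[Y])$ is a surjection $\Gone\to PU(\ell)\times PU(\ell)$ whose kernel is the central subgroup $\langle\Gamma(\xi)\rangle\cong\mathbb{Z}/\ell$, giving a fibration
\[
B\mathbb{Z}/\ell\to B\Gone\to B(PU(\ell)\times PU(\ell)).
\]
Under the second projection $A_3$ maps onto the diagonally embedded $A_2$, since $\Delta(\alpha)\mapsto([\alpha],[\alpha])$, $\Delta(\beta)\mapsto([\beta],[\beta])$ and $\Gamma(\xi)\mapsto(1,1)$, while its kernel is exactly $\langle\Gamma(\xi)\rangle=\langle z_1\rangle$. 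Thus $\iota$ covers the diagonal inclusion of $A_2$ and is the identity on the common fibre $B\langle\Gamma(\xi)\rangle$; moreover the extension $1\to\langle z_1\rangle\to A_3\to A_2\to 1$ splits, so the associated $A_3$-fibration is trivial and its spectral sequence collapses. Both bases are simply connected, so all local systems are trivial.

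For the computation of $H^4(B\Gone;\mathbb{Z}/\ell)$ I would use the first fibration, whose $E_2$-page is $H^*(BPU(\ell);\mathbb{Z}/\ell)\otimes H^*(BSU(\ell);\mathbb{Z}/\ell)$. Since $H^*(BSU(\ell);\mathbb{Z}/\ell)$ vanishes in degrees $1,2,3,5$ and is spanned by the Chern class $c_2$ in degree $4$, Proposition~\ref{proposition:pu} shows that the total degree $4$ part of $E_2$ is spanned by $v_2^2$ (in bidegree $(4,0)$) and $c_2$ (in bidegree $(0,4)$). Every potential target of a differential on $c_2$ lies in $H^q(BSU(\ell);\mathbb{Z}/\ell)$ with $1\le q\le 3$ or in $H^5(BPU(\ell);\mathbb{Z}/\ell)$, all of which vanish, and every potential source mapping onto $v_2^2$ vanishes for the same reasons; hence both are permanent cocycles and $H^4(B\Gone;\mathbb{Z}/\ell)=\mathbb{Z}/\ell\oplus\mathbb{Z}/\ell$.

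For the surjectivity of $\iota^*$ I would switch to the second fibration and compare its spectral sequence with the collapsing one for $A_3$. Writing $v_2,v_3$ and $v_2',v_3'$ for the generators coming from the two $PU(\ell)$-factors and $s,t=Q_0 s$ for the fibre classes, the transgression $d_2(s)$ equals the extension class of the central extension; restricting to either factor recovers the standard extension $\mathbb{Z}/\ell\to SU(\ell)\to PU(\ell)$, whose class is $v_2$, and the two restrictions occur with opposite signs, so $d_2(s)=\pm(v_2-v_2')$, which indeed vanishes on the diagonal. Kudo's transgression theorem then gives $d_3(t)=Q_0(d_2 s)$, a nonzero element of $\langle v_3,v_3'\rangle$, and, using $v_2v_3=v_2'v_3'=0$ together with these differentials, I expect the two survivors in total degree $4$ to sit in $E_\infty^{4,0}$ and in $E_\infty^{2,2}=\langle v_2t\rangle$ (with $v_2t=v_2't$). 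Under the comparison map the survivor represented by $v_2t$ goes to the restriction of $v_2$ to the diagonal $A_2$ times $z_2$; by the isomorphism of Proposition~\ref{proposition:pu} this restriction is $x_1y_1$ up to a unit, so the class maps to $x_1y_1z_2\neq 0$ in $E_\infty^{2,2}(BA_3)=H^2(BA_2;\mathbb{Z}/\ell)\otimes\langle z_2\rangle$. Since $x_1y_1z_2$ is exactly the lowest-filtration summand of $Q_0(x_1y_1z_1)=x_1y_1z_2+(x_2y_1-x_1y_2)z_1$, and since $\iota^*$ factors through $H^4(BA_3;\mathbb{Z}/\ell)^W=\mathbb{Z}/\ell\{Q_0(x_1y_1z_1)\}$ by Proposition~\ref{proposition:odd-invariants}, this nonzero $W$-invariant image forces $\iota^*$ of the class to be a nonzero multiple of $Q_0(x_1y_1z_1)$; hence $\iota^*$ is onto.

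The principal obstacle is the middle spectral-sequence analysis for the central extension: correctly identifying the transgression $d_2(s)$ as the extension class $\pm(v_2-v_2')$ and the Kudo transgression $d_3(t)=Q_0(d_2 s)$, and then verifying, with the relations $v_2v_3=v_2'v_3'=0$, that the remaining differentials cut the (eight-dimensional) total degree $4$ part of $E_2$ down to precisely the two classes in $E_\infty^{4,0}$ and $E_\infty^{2,2}$. A further delicate point is the compatibility of the two spectral sequences under $\iota$: the triviality of the $A_3$-fibration and the identification of the restriction of $v_2$ with $x_1y_1$ must conspire so that the leading term of $\iota^*$ of the $E_\infty^{2,2}$-class is genuinely $x_1y_1z_2$ and is not annihilated, which is what ultimately detects the nonzero invariant.
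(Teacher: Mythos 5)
Your proposal is correct, and it reaches both conclusions of the proposition, but it splits into one genuinely different ingredient and one that coincides with the paper. For the dimension count the paper runs the Serre spectral sequence of $B\langle\Gamma(\xi)\rangle\to B\Gone\to B(PU(\ell)\times PU(\ell))$ through the $E_4$-level in total degree $\leq 4$; you instead read the answer off the projection to one factor, $BSU(\ell)\to B\Gone\to BPU(\ell)$, where $H^{q}(BSU(\ell);\mathbb{Z}/\ell)=0$ for $1\leq q\leq 3$ and $q=5$ together with $H^5(BPU(\ell);\mathbb{Z}/\ell)=0$ kill every relevant differential at once. This is cleaner for that step (and your identification of the kernel $(\langle\xi\rangle\times SU(\ell))/\langle\Delta(\xi)\rangle\cong SU(\ell)$ is correct), though since you still need the central-extension fibration for the comparison with $A_3$, you end up running two spectral sequences where the paper runs one. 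For the differentials in that second spectral sequence, the paper pins down $d_2(z_1)=-\alpha_1 a_2$ and $d_3(z_2)=-\alpha_2 a_3$ by naturality against the two flanking fibrations (pullback along $\Gamma$ forces the $a$-component to be nonzero, collapse over $BA_3$ forces the $b$-component to vanish), while you identify $d_2(z_1)$ as the extension class, compute its two restrictions to the factors with opposite signs, and invoke Kudo's transgression theorem for $d_3(z_2)$; both routes are valid, and your sign analysis of the two restrictions is right (note $H^1(BPU(\ell);\mathbb{Z}/\ell)=0$, so the class is indeed determined by those two restrictions). The step you flag as an expectation does check out: $E_2^{3,1}$ is killed because $a_2a_3$ and $a_2b_3$ are linearly independent in $H^5$ of the base, $z_2^2$ dies since $d_3(z_2^2)=2z_2\,d_3(z_2)\neq 0$ for odd $\ell$, and $b_2z_2$ survives since $b_2a_3=-v_2\otimes v_3$ already lies in the image of $d_2$, hence vanishes in $E_3^{5,0}$. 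The final surjectivity argument --- the class in $E_\infty^{2,2}$ maps to $x_1y_1z_2\neq 0$, so $\iota^*$ hits a nonzero element of the one-dimensional space $H^4(BA_3;\mathbb{Z}/\ell)^W$ --- is exactly the paper's.
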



\begin{proof}[Proof of Proposition~\ref{proposition:projective} 
modulo Proposition~\ref{proposition:bg1}]
Since the rational cohomology of $B\Gone$ is isomorphic to that of 
$B(SU(\ell)\times SU(\ell))$, 
it is a polynomial algebra 
generated by $2(\ell-1)$ elements of degree $4, 4, 6, 6, \dots, 2\ell, 2\ell$.
In particular, $H^4(B\Gone;\mathbb{Q})=\mathbb{Q}\oplus \mathbb{Q}$.
For a topological space $X$ of the homotopy type of a CW complex of finite type, 
\[
\dim_{\mathbb{Q}} H^i(X;\mathbb{Q})\leq \dim_{\mathbb{Z}/\ell} H^i(X;\mathbb{Z}/\ell).
\]
If
\[
\dim_{\mathbb{Q}} H^i(X;\mathbb{Q})= \dim_{\mathbb{Z}/\ell} H^i(X;\mathbb{Z}/\ell),
\]
the mod $\ell$ reduction 
\[
\Reduction:H^i(X;\mathbb{Z}) \to H^i(X;\mathbb{Z}/\ell)
\]
is an epimorphism and $H^i(X;\mathbb{Z})$ is torsion free.
Therefore, by Proposition~\ref{proposition:bg1},
we have that
$H^4(BG;\mathbb{Z})=\mathbb{Z}\oplus \mathbb{Z}$ and 
that the mod $\ell$ reduction 
\[
\Reduction:H^{4}(B\Gone;\mathbb{Z})
\to H^{4}(B\Gone;\mathbb{Z}/\ell)
\]
 is an epimorphism.
Therefore, by Proposition~\ref{proposition:odd-invariants},
we have the existence of non-zero element
$u_4\in H^4(B\Gone;\mathbb{Z})$ 
such that $\Reduction(u_4)=Q_0(x_1y_1z_1)$.
It implies Proposition~\ref{proposition:projective} for all odd prime numbers $\ell$.
\end{proof}


Now, we prove Proposition~\ref{proposition:bg1} above.
It is clear that $\Gone/\langle  \Gamma(\xi) \rangle=PU(\ell)\times PU(\ell)$.
We consider the following commutative diagram.
\[
\begin{diagram}
\node{A_3} \arrow{e,t}{\iota} \arrow{s,l}{\pi}
\node{\Gone}\arrow{s,r}{\pi}
\node{SU(\ell)} \arrow{w,t}{\Gamma} \arrow{s,r}{\pi} \\
\node{A_2} \arrow{e,t}{\iota}
\node{PU(\ell)\times PU(\ell)}
\node{PU(\ell)} \arrow{w,t}{\Gamma}
\end{diagram}
\]
where
the map $\iota:A_2\to PU(\ell)\times PU(\ell)$ 
is the composition of the diagonal map $$\Delta:PU(\ell)
\to PU(\ell)\times PU(\ell)$$ and  
the inclusion of $A_2$ into $PU(\ell)$.
From the above commutative diagram, we have 
fibre squares
\[
\begin{diagram}
\node{BA_3} \arrow{e,t}{\iota} \arrow{s,l}{\pi}
\node{B\Gone}\arrow{s,r}{\pi}
\node{BSU(\ell)} \arrow{w,t}{\Gamma} \arrow{s,r}{\pi}  \\
\node{BA_2} \arrow{e,t}{\iota}
\node{B(PU(\ell)\times PU(\ell))}
\node{BPU(\ell)} \arrow{w,t}{\Gamma}
\end{diagram}
\]
We consider the Leray-Serre spectral sequences associated with
vertical fibrations and the induced homomorphism between them.
For $Y=B\Gone, BA_3, BSU(\ell)$, 
we denote by $E_r^{p,q}(Y)$  the spectral sequences associated with the 
above fibrations converging to the mod $\ell$ cohomology 
$H^{*}(Y;\mathbb{Z}/\ell)$.
We also write 
$\iota^*:E_r^{p,q}(B\Gone)\to E_r^{p,q}(BA_3)$, $\Gamma^*:E_r^{p,q}(BSU(\ell))
\to E^{p,q}_r(BG)$ for  the induced homomorphisms.

We  compute the Leray-Serre spectral sequence for 
$H^{*}(B\Gone;\mathbb{Z}/\ell)$ up to degree $4$, 
starting with  the $E_2$-term up to degree $6$.

First, we describe the $E_2$-term up to degree $6$.
Identifying 
the mod $\ell$ cohomology of $B(PU(\ell)\times PU(\ell))$ with 
\[
H^{*}(BPU(\ell);\mathbb{Z}/\ell)\otimes H^{*}(BPU(\ell);\mathbb{Z}/\ell),
\]
let us consider the following algebra generators of 
the mod $\ell$ cohomology of the classifying space 
$B(PU(\ell)\times PU(\ell))$:
\[ \begin{array}{lll}
1=1\otimes 1, & \quad a_2=v_2\otimes 1-1 \otimes v_2, & \quad
a_3=v_3\otimes 1-1\otimes v_3, \\
& \quad b_2=v_2\otimes 1, & \quad 
b_3=v_3\otimes 1.
\end{array}
\] 
Then, up to degree $6$, the mod $\ell$ cohomology of $B(PU(\ell)\times PU(\ell))$ 
is  a free $\mathbb{Z}/\ell[a_2]$-module 
\[
\mathbb{Z}/\ell[a_2] \{ 1, b_2,b_2^2, b_2^3, a_3, b_3, a_3b_3\}.
\]

Next, we consider non-trivial differentials.
We denote by $z_1$, $z_2=Q_0 z_1$ the algebra generators 
of degree $1$, $2$ of the mod $\ell$ cohomology of 
the fibre $B\langle\Gamma(\xi)\rangle=B\langle \xi \rangle$ of the projection $\pi$, 
so that
\[
H^{*}(B\langle\Gamma(\xi)\rangle;\mathbb{Z}/\ell)
=\mathbb{Z}/\ell[z_2] \otimes \Lambda(z_1).
\]
By definition, $\Gamma^{*}(b_i)=0$, $\Gamma^*(a_i)=-v_i$ for $i=2,3$.
Moreover, by choosing suitable $v_2$, $v_3$,  we may assume that
$\iota^*(a_i)=0$ for $i=2,3$, $\iota^*(b_2)=x_1y_1$, 
$\iota^*(b_3)=x_2y_1-x_1y_2$.
Since $v_2, v_3$ are in the image of the induced homomorphism
\[
{\varphi'}^*: H^{*}(K(\mathbb{Z}/\ell, 2);\mathbb{Z}/\ell) \to 
H^{*}(BPU(\ell);\mathbb{Z}/\ell), 
\]
in the  spectral sequence $E_r^{*,*}(BSU(\ell))$, 
$d_2(z_1)=
\alpha_1 v_2$ and
$d_3(z_2)=\alpha_2 v_3$ for some 
$\alpha_1\not=0, \alpha_2\not=0$ in $\mathbb{Z}/\ell$.
On the other hand, since the induced homomorphism $H^{*}(BA_3;\mathbb{Z}/\ell)\to
H^{*}(B\langle \Gamma(\xi)\rangle;\mathbb{Z}/\ell)$ is an epimorphism, 
the spectral sequence $E_r^{*,*}(BA_3)$ collapses at the $E_2$-level
and $d_2(z_1)=d_3(z_2)=0$ in the spectral sequence $E_r^{*,*}(BA_3)$.
Thus, we have non-trivial differentials $d_2(z_1)=-\alpha_1 a_2$, 
$d_3(z_2)=-\alpha_2 a_3$ in the spectral sequence $E_r^{*,*}(BG_1)$.

Now, for the spectral sequence $E_r^{*,*}(BG_1)$, we compute the $E_3$-term up to degree $5$ and 
$E_r$-term up to degree $4$ for $r\geq 4$.
Since $d_2(z_1)=-\alpha_1 a_2$, 
the kernel of $d_2$ up to degree $5$ is
a free $\mathbb{Z}/\ell[a_2, z_2]$-module with the basis 
$\{1, b_2, b_2^2, a_3, b_3\}$
and the image of $d_2$ is $(a_2) \{ 1, b_2, b_2^2, a_3, b_3\}$.
So, the $E_3$-term is a free $\mathbb{Z}/\ell[z_2]$-module spanned 
by $1, b_2, b_2^2, a_3, b_3$ up to degree $5$.
Since $a_3 \not=0$, $a_3 z_2\not=0$, $a_3 b_2=0$ in $E_3^{*,*}$, 
the image of $d_3$ is spanned by 
\[
a_3=(-\alpha_2)^{-1} d_3(z_2)
\]
and the kernel of $d_3$ is spanned by 
\[
1, b_2, b_2z_2, b_2^2, a_3, b_3, 
\]
up to degree $4$, respectively.
Therefore, the $E_4$-term of the Leray-Serre spectral sequence for 
$H^{*}(B\Gone;\mathbb{Z}/\ell)$ 
up to degree $4$ is a graded vector space spanned by 
\[
1, b_2, b_2^2, 
b_2z_2, b_3.
\]
These generators are in $E_4^{*,q}$ $(q\leq 2)$, so that
these elements are in the kernel of $d_r$ for $r\geq 4$.
Therefore, up to degree $4$, the spectral sequence collapses at the $E_4$-level
and we obtain that
\[
H^i(B\Gone;\mathbb{Z}/\ell)=\left\{ \begin{array}{ll} 0 & \mbox{for $i=1$, }
\\
\mathbb{Z}/\ell & \mbox{for $i=0, 2, 3$, }
\\
\mathbb{Z}/\ell\oplus \mathbb{Z}/\ell & \mbox{for $i=4$.}
\end{array}
\right.
\]

Finally, we describe the induced homomorphism 
from the mod $\ell$ cohomology of the classifying space of $\Gone$ to
that of $A_3$.
The Leray-Serre spectral sequence for $H^{*}(BA_3;\mathbb{Z}/\ell)$ collapses at the 
$E_2$-level, so that 
\[
E_r^{*,*}(BA_3)=\mathbb{Z}/\ell[x_2, y_2, z_2]\otimes \Lambda(x_1, y_1, z_1),
\]
where $x_1, y_1 \in E_r^{1.0}(BA_3)$, $x_2, y_2 \in E_r^{2.0}(BA_3)$
and $z_1\in E_r^{0,1}(BA_3)$, $z_2\in E^{2,0}_r(BA_3)$.
The induced homomorphism of spectral sequences 
$\iota^*:E_\infty^{2,2}(B\Gone)\to E_\infty^{2,2}(BA_3)$
maps $b_2z_2$ to $x_1y_1z_2$.
Therefore, the induced homomorphism 
\[
\iota^*:H^{*}(B\Gone;\mathbb{Z}/\ell) \to H^{*}(BA_3;\mathbb{Z}/\ell)^W
\]
maps an element  representing $b_2z_2$ to $x_1y_1 z_2+\mbox{higher terms}$, 
which is non-zero in $H^{4}(BA_3;\mathbb{Z}/\ell)^W$.
By Proposition~\ref{proposition:odd-invariants}, 
$\dim_{\mathbb{Z}/\ell} H^{4}(BA_3;\mathbb{Z}/\ell)^W=1$.
Hence, the induced homomorphism above
 is an epimorphism.

\section{The case $\ell=2$}

Now, we deal with the case $\ell=2$.
For $\ell=2$, we define 
\[
\xi=\begin{pmatrix} -1 & 0 \\ 0 & -1 \end{pmatrix}, \quad 
\alpha=\begin{pmatrix}
 i & 0 \\ 0 & -i \end{pmatrix}, \quad
\beta=\begin{pmatrix} 0 &  i \\ i & 0 \end{pmatrix}, \quad 
\tau=\dfrac{1}{\sqrt{2}} \begin{pmatrix} 1 & i \\ i & 1
\end{pmatrix}.
\]
Then, by direct computation, we have the following proposition.
\begin{proposition}
$\alpha, \beta, \xi,\sigma, \tau$ are  unitary groups with determinant $1$. Moreover,
we have
\[
\beta^{-1} \alpha \beta=\xi \alpha;
\;
\sigma^{-1} \alpha\sigma=\alpha, \; \sigma^{-1} \beta \sigma=\alpha \beta; 
\;
\tau^{-1} \alpha\tau=\alpha\beta, \; \tau^{-1} \beta \tau=\beta.
\]
\end{proposition}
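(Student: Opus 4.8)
The plan is to prove this the same way Proposition~\ref{proposition:auto} was proved for odd $\ell$: by direct $2\times 2$ matrix multiplication. The one conceptual point is that the diagonal recipe of \S2 breaks down at $\ell=2$, since $\mathrm{diag}(\xi,\xi^2)=\mathrm{diag}(-1,1)$ has determinant $-1$ and hence does not lie in $SU(2)$; this is why the quaternionic matrices $\alpha=\mathrm{diag}(i,-i)$, $\beta$, and $\xi=-I$ are substituted above. I would first record the elementary identities $\alpha^2=\beta^2=\xi$ and $\xi^2=I$, so that $\langle\alpha,\beta,\xi\rangle$ is the quaternion group of order $8$ and $A_2=\langle\alpha,\beta,\xi\rangle/\langle\xi\rangle\cong(\mathbb{Z}/2)^2$ is elementary abelian of rank $2$, playing the role of the $A_2$ of \S2. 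Because the statement also refers to $\sigma$, which is absent from the displayed matrices, the first task is to supply it: the identity $\sigma^{-1}\alpha\sigma=\alpha$ forces $\sigma$ to centralise the regular diagonal matrix $\alpha$, hence to be diagonal, and the unique diagonal element of $SU(2)$ (up to the center) satisfying $\sigma^{-1}\beta\sigma=\alpha\beta$ is
\[
\sigma=\mathrm{diag}(e^{-\pi i/4},e^{\pi i/4}).
\]

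Next I would dispatch the unitarity and determinant claims. For the diagonal matrices $\xi$, $\alpha$, $\sigma$ both $\transpose{\bar{M}}M=I$ and $\det M=1$ are immediate. For $\beta$ one checks $\transpose{\bar{\beta}}\beta=I$ and $\det\beta=-i^2=1$ directly, and for $\tau=\tfrac{1}{\sqrt2}\left(\begin{smallmatrix}1&i\\i&1\end{smallmatrix}\right)$ the factor $\tfrac1{\sqrt2}$ yields $\transpose{\bar{\tau}}\tau=I$ while $\det\tau=\tfrac12(1-i^2)=1$. Thus all five matrices lie in $SU(2)$.

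The heart of the computation is the conjugation relations, each obtained by multiplying three $2\times2$ matrices (the relation $\sigma^{-1}\alpha\sigma=\alpha$ being immediate since both are diagonal). The only intermediate worth isolating is
\[
\alpha\beta=\left(\begin{smallmatrix}0&-1\\1&0\end{smallmatrix}\right),
\]
after which $\beta^{-1}\alpha\beta=\xi\alpha$ (equivalently $[\alpha,\beta]=\xi$, matching the odd case), $\sigma^{-1}\beta\sigma=\alpha\beta$, $\tau^{-1}\alpha\tau=\alpha\beta$, and $\tau^{-1}\beta\tau=\beta$ all fall out of short computations. Reading these modulo the center, $\sigma$ and $\tau$ induce on $A_2\cong(\mathbb{Z}/2)^2$ the transvections $\left(\begin{smallmatrix}1&1\\0&1\end{smallmatrix}\right)$ and $\left(\begin{smallmatrix}1&0\\1&1\end{smallmatrix}\right)$, which generate $SL_2(\mathbb{F}_2)=GL_2(\mathbb{F}_2)$, the full Weyl group of $A_2$ in $PU(2)$; this is exactly the $\ell=2$ analogue of Proposition~\ref{proposition:auto} needed to run the arguments of \S3 and \S5.

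Since the entire proof reduces to finite arithmetic with $2\times2$ matrices, there is no genuine obstacle in the verification itself; the only real care is in correctly pinning down $\sigma$ and in observing why $\alpha,\beta$ must be the quaternionic matrices rather than the diagonal matrices of \S2. I would remark that the downstream construction then parallels \S3--\S5 with the mod~$2$ simplifications that $H^*(BA_3;\mathbb{Z}/2)=\mathbb{Z}/2[x_1,y_1,z_1]$ is polynomial, $Q_0x_1=x_1^2$, and $Q_1Q_0(x_1y_1z_1)\neq0$ as already noted in the introduction, so that Proposition~\ref{proposition:projective} follows for $\ell=2$ as well.
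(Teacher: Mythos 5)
Your proposal is correct and takes the same route as the paper, which proves this proposition purely by direct $2\times 2$ matrix computation. The one point where you add genuine value is in noticing that the paper's display for $\ell=2$ omits a definition of $\sigma$ and in supplying $\sigma=\mathrm{diag}(e^{-\pi i/4},e^{\pi i/4})$, which one checks does lie in $SU(2)$ and does satisfy $\sigma^{-1}\alpha\sigma=\alpha$ and $\sigma^{-1}\beta\sigma=\alpha\beta$ (the diagonal recipe of \S2 degenerates to a central matrix when $\ell=2$, so some such replacement is indeed required).
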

So, 
$\langle \alpha, \beta, \xi\rangle /\langle \xi\rangle$
is an elementary abelian $2$-group.
The matrixes representing the inner automorphisms defined by $\sigma$ and $\tau$ are
given by 
\[
(\alpha, \beta)^{\sigma} = (\alpha, \beta) \begin{pmatrix} 1 & 0 \\ 1 & 1 \end{pmatrix}, 
\quad
(\alpha, \beta)^{\tau}=(\alpha, \beta) \begin{pmatrix} 1 & 1 \\ 0 & 1 \end{pmatrix}.
\]
Theses matrixes generate the special linear group $SL_2(\mathbb{Z}/\ell)$. 
The ring of invariants $H^{*}(BA_2;\mathbb{Z}/2)^{ SL_2(\mathbb{Z}/2)}$
is known as Dickson invariants 
\[
\mathbb{Z}/2[u_{2}, u_{3}],
\]
where 
\begin{align*}
u_3
&=x_1y_1^2+x_1^2y_1,
\\
u_2
&=x_1^2+x_1y_1+y_1^2.
\end{align*}
Let us consider the subgroup $W$ generated by 
the inner automorphisms defined by
$\Gamma(\beta), \Delta(\sigma), \Delta(\tau)$ corresponding to 
the following matrixes:
\[
\begin{pmatrix} 1 & 0 & 0 \\ 0 & 1 & 0 \\ 1 & 0 & 1 \end{pmatrix}, \;
\begin{pmatrix} 1 & 1 & 0 \\ 0 & 1 & 0 \\ 0 & 0 & 1 \end{pmatrix}, \;
\begin{pmatrix} 1 & 0 & 0 \\ 1 & 1 & 0 \\ 0 & 0 & 1 \end{pmatrix}.
\]
It is equal to 
\[
\left. \left\{ \begin{pmatrix} a & b & 0 \\ c & d & 0 \\ * & 0 & 1 \end{pmatrix} 
\; \right| \; ad -bc =1 \right\}.
\]
We denote by $W_0$ the subgroup 
\[
\left. \left\{ \begin{pmatrix} a & b & 0 \\ c & d & 0 \\ 0 & 0 & 1 \end{pmatrix} 
\; \right| \; ad-bc =1 \right\}.
\]
The set of invariants $H^4(BA_3;\mathbb{Z}/2)^{W_0}$ is
a $\mathbb{Z}/2$ vector space spanned by 
$u_2^2$, $u_3 z_1$, $u_2z_1^2$, $z_1^4$.
As in the proof of Proposition~\ref{proposition:odd-invariants}, 
considering the homomorphism $f:A_3\to A_3$  induced by the inner automorphism
defined by $\Gamma(\beta)$, 
and the kernel of the induced homomorphism 
\[
1+f^*: H^4(BA_3;\mathbb{Z}/2)^{W_0} \to H^4(BA_3;\mathbb{Z}/2), 
\]
we have the following 
proposition.

%
%
\begin{proposition}
\label{proposition:even-invariants}
The set of invariants $H^4(BA_3;\mathbb{Z}/2)^{W}$ is
a $\mathbb{Z}/2$ vector space spanned by 
$u_2^2$, $u_3 z_1+u_2z_1^2+z_1^4$.
Moreover, 
\begin{align*}
&\; Q_1(u_3 z_1+u_2z_1^2+z_1^4)
\\
=&\; Q_0Q_1(x_1y_1z_1)
\\
=&\; x_1^4y_1^2z_1+
x_1^4y_1z_1^2+x_1^2y_1^4z_1+x_1^2y_1z_1^4+ 
x_1y_1^4z_1^2+x_1y_1^2z_1^4\not=0.
\end{align*}
\end{proposition}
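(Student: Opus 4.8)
The plan is to prove the two assertions separately, mirroring the structure of the proof of Proposition~\ref{proposition:odd-invariants}. For the first assertion I would start from the already-identified basis $u_2^2,\ u_3z_1,\ u_2z_1^2,\ z_1^4$ of $H^4(BA_3;\mathbb{Z}/2)^{W_0}$ (which comes from $H^{*}(BA_3;\mathbb{Z}/2)^{W_0}=\mathbb{Z}/2[u_2,u_3,z_1]$, the Dickson invariants of $SL_2(\mathbb{Z}/2)$ in $x_1,y_1$ tensored with the fixed class $z_1$) and then cut down by the remaining generator $f=\Gamma(\beta)$, which together with $W_0$ generates $W$. Since $f^{*}$ fixes $x_1,y_1$ and sends $z_1\mapsto x_1+z_1$, it fixes $u_2$ and $u_3$, so I can evaluate $1+f^{*}$ on each basis element.

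Carrying this out in characteristic $2$, using $(x_1+z_1)^2=x_1^2+z_1^2$ and $(x_1+z_1)^4=x_1^4+z_1^4$, gives
\begin{align*}
(1+f^{*})(u_2^2)&=0,\\
(1+f^{*})(u_3z_1)&=u_3x_1,\\
(1+f^{*})(u_2z_1^2)&=u_2x_1^2,\\
(1+f^{*})(z_1^4)&=x_1^4.
\end{align*}
Writing a general element $a\,u_2^2+b\,u_3z_1+c\,u_2z_1^2+d\,z_1^4$ and expanding $b\,u_3x_1+c\,u_2x_1^2+d\,x_1^4$ in the monomials $x_1^4,\,x_1^3y_1,\,x_1^2y_1^2$ of $\mathbb{Z}/2[x_1,y_1]$, the vanishing conditions read $c+d=0$ and $b+c=0$, forcing $b=c=d$ with $a$ free. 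Hence the kernel of $1+f^{*}$ on $H^4(BA_3;\mathbb{Z}/2)^{W_0}$, which is exactly $H^4(BA_3;\mathbb{Z}/2)^{W}$ since $W_0$ and $f$ generate $W$, is two-dimensional and spanned by $u_2^2$ and $u_3z_1+u_2z_1^2+z_1^4$.

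For the Milnor-operation identity I would use only that $Q_1$ is a derivation, that $Q_1(t)=t^4$ on a degree-$1$ class $t$, and that $Q_1$ annihilates squares. This yields $Q_1(u_2)=x_1^4y_1+x_1y_1^4$, $Q_1(u_3)=x_1^4y_1^2+x_1^2y_1^4$, $Q_1(z_1)=z_1^4$, and $Q_1(z_1^2)=Q_1(z_1^4)=0$; expanding $Q_1(u_3z_1+u_2z_1^2+z_1^4)$ by the Leibniz rule then produces the six-term expression in the statement. Independently, $Q_1(x_1y_1z_1)=x_1^4y_1z_1+x_1y_1^4z_1+x_1y_1z_1^4$, and applying $Q_0=\mathrm{Sq}^1$ (again a derivation killing fourth powers) yields the same six distinct monomials, so the two computations agree and the class is visibly non-zero. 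The whole argument is a finite characteristic-$2$ calculation with no real obstacle; the only point demanding care is bookkeeping consistency, and as a sanity check one may observe the conceptual identity $u_3z_1+u_2z_1^2+z_1^4=\prod_{v\in\langle x_1,y_1\rangle}(z_1+v)$, which makes the $W$-invariance transparent and clarifies why the $Q_1$-image reproduces $Q_0Q_1(x_1y_1z_1)$.
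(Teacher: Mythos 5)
Your proposal is correct and takes essentially the same route as the paper: the paper also obtains $H^4(BA_3;\mathbb{Z}/2)^{W}$ as the kernel of $1+f^{*}$ acting on the degree-$4$ part of $\mathbb{Z}/2[u_2,u_3,z_1]=H^{*}(BA_3;\mathbb{Z}/2)^{W_0}$, with $f=\Gamma(\beta)$ sending $z_1\mapsto x_1+z_1$, and verifies the $Q_1$ identity by the same direct Milnor-operation calculation. Your write-up in fact supplies more detail than the paper, which compresses the argument to ``as in the proof of Proposition~\ref{proposition:odd-invariants}''; the closed-form observation $u_3z_1+u_2z_1^2+z_1^4=\prod_{v\in\langle x_1,y_1\rangle}(z_1+v)$ is a nice extra but not needed.
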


Now, we end this paper by proving 
Propositions~\ref{proposition:projective} and \ref{proposition:bg1} 
for $\ell=2$.

\begin{proof}[Proof of Proposition~\ref{proposition:bg1} for $\ell=2$]
Since $PU(2)=SO(3)$, 
the mod $2$ cohomology of $BPU(2)$ is 
a polynomial algebra $\mathbb{Z}/2[v_2, v_3]$
and the induced homomorphism 
\[
H^2(BPU(2);\mathbb{Z}/2)\to H^2(BA_2;\mathbb{Z}/2)^{SL_2(\mathbb{Z}/2)}
\] is an isomorphism.
As in the odd prime case, 
let 
\[
\begin{array}{lll}
1 =1\otimes 1, & \quad a_2=v_2\otimes 1 + 1\otimes v_2, 
& \quad a_3=v_3\otimes 1 + 1\otimes v_3,
\\
& \quad b_2=v_1\otimes1, & \quad b_3=v_3\otimes 1.
\end{array}
\]
Then, 
the mod $2$ cohomology of 
$B(PU(2)\times PU(2))$ is also a polynomial algebra 
\[
\mathbb{Z}/2[a_2, a_3, b_2, b_3].
\]
The $E_2$-term $E_2^{*,*}(BG_1)$ is 
\[
\mathbb{Z}/2[a_2, a_3, b_2, b_3] \otimes \mathbb{Z}/2[z_1].
\]
The first non-trivial differential is given by
$d_2(z_1)=a_2$.
So, $E_3$-term is 
\[
\mathbb{Z}/2[ a_3, b_2, b_3] \otimes \mathbb{Z}/2[z_1^2].
\]
The next non-trivial differential is given by $d_3(z_1^2)=a_3$.
The $E_4$-term is 
\[
\mathbb{Z}/2[b_2, b_3]\otimes \mathbb{Z}/2[z_1^4].
\]
So, $\dim_{\mathbb{Z}/2} H^4(B\Gone;\mathbb{Z}/2)\leq 2$.
On the other hand, since $\dim_{\mathbb{Q}} H^4(B\Gone;\mathbb{Q})=2$, 
$d_r(z_1^4)=0$. Thus, the spectral sequence collapses at the $E_4$-level and 
we obtain 
\[
H^i(B\Gone;\mathbb{Z}/2)=\left\{ \begin{array}{ll} 0 & \mbox{for $i=1$, }
\\
\mathbb{Z}/2 & \mbox{for $i=0, 2, 3$, }
\\
\mathbb{Z}/2 \oplus \mathbb{Z}/2 & \mbox{for $i=4$.}
\end{array}
\right.
\]
As in the case that $\ell$ is an odd prime number, 
we consider the induced homomorphism
\[
\iota^*:H^{*}(B\Gone;\mathbb{Z}/2) \to H^{*}(BA_3;\mathbb{Z}/2).
\]
The spectral sequence for $H^{*}(BA_3;\mathbb{Z}/2)$ 
collapses at the $E_2$-level and
the induced homomorphisms
$\iota^{*}:E_\infty^{*, *}(B\Gone) \to E_\infty^{*,*}(BA_3)$ is
a monomorphism sending $b_2$, $z_1$ to $u_2$, $z_1$, respectively.
In particular, $\iota^*(b_2^2)=u_2^2$ and $\iota^*(z_1^4)=z_1^4$ in $E_\infty^{*,*}(BA_3)$.
It is clear that the image of the induced homomorphism
\[
\iota^{*}:H^4(B\Gone;\mathbb{Z}/2)\to H^4(BA_3;\mathbb{Z}/2)^W
\]
has dimension $2$. By Proposition~\ref{proposition:even-invariants},  
$\dim_{\mathbb{Z}/2} H^4(BA_3;\mathbb{Z}/2)^W=2$.
Hence, 
the induced homomorphism
above
 is an isomorphism.
\end{proof}

As in the proof of Proposition~\ref{proposition:projective} 
in \S5,
it is clear that the mod $2$ reduction 
\[
\Reduction:H^4(BG_1;\mathbb{Z})\to H^4(BG_1;\mathbb{Z}/2)
\]
is also an epimorphism. Therefore there exists an element 
 $u_4 \in H^4(BG_1;\mathbb{Z})$
such that $Q_1\Reduction(u_4)\not=0$ by Proposition~\ref{proposition:even-invariants}.
It completes the proof of 
 Proposition~\ref{proposition:projective}
 for $\ell=2$.



\begin{thebibliography}{9}

\bibitem{atiyah-hirzebruch-1962}
M. F. Atiyah\ and\ F. Hirzebruch, Analytic cycles on complex manifolds, 
Topology {\bf 1} (1962), 25--45. MR0145560 (26 \#3091)

\bibitem{baum-browder-1965}
P. F. Baum\ and\ W. Browder, The cohomology of quotients of classical groups, 
Topology {\bf 3} (1965), 305--336. MR0189063 (32 \#6490)

\bibitem{colliot-thelene-szamuely-2010}
J.-L. Colliot-Th\'el\`ene\ and\ T. Szamuely, 
Autour de la conjecture de Tate \`a coefficients ${\bf Z}\sb {\ell}$ 
pour les vari\'et\'es sur les corps finis, in {\it The geometry of algebraic cycles}, 83--98, 
Clay Math. Proc., 9, Amer. Math. Soc., Providence, RI. MR2648666 (2011g:14006)

\bibitem{kameko-mimura-2007}
M. Kameko\ and\ M. Mimura, M\`ui invariants and Milnor operations, 
in {\it Proceedings of the School and Conference in Algebraic Topology}, 107--140, 
Geom. Topol. Monogr., 11, Geom. Topol. Publ., Coventry. MR2402803 (2009g:55020)



\bibitem{kameko-yagita-2008}
M. Kameko\ and\ N. Yagita, The Brown-Peterson cohomology of the classifying spaces
 of the projective unitary groups ${\rm PU}(p)$ and exceptional Lie groups, 
 Trans. Amer. Math. Soc. {\bf 360} (2008), no.~5, 2265--2284. 
 MR2373313 (2008m:55008)

\bibitem{kono-mimura-shimada-1975}
A. Kono, M. Mimura\ and\ N. Shimada, 
Cohomology of classifying spaces of certain associative $H$-spaces, 
J. Math. Kyoto Univ. {\bf 15} (1975), no.~3, 607--617. MR0388426 (52 \#9262)

\bibitem{mui-1975}
Hu\`ynh Mui, Modular invariant theory and cohomology algebras of symmetric groups, 
J. Fac. Sci. Univ. Tokyo Sect. IA Math. {\bf 22} (1975), no.~3, 319--369. 
MR0422451 (54 \#10440)

\bibitem{pirutka-yagita-2014}
A. Pirutka\ and\ N. Yagita, Note on the counterexamples for the integral 
Tate conjecture over finite fields, preprint, arXiv:1401.1620v1.

\bibitem{totaro-1997} B. Totaro, Torsion algebraic cycles and complex cobordism, J. 
Amer. Math. Soc. {\bf 10} (1997), no.~2, 467--493. MR1423033 (98a:14012)

\bibitem{totaro-1999}
B. Totaro, The Chow ring of a classifying space, in {\it Algebraic $K$-theory
 (Seattle, WA, 1997)}, 249--281, Proc. Sympos. Pure Math., 67, Amer. Math. Soc., 
 Providence, RI. MR1743244 (2001f:14011)

\bibitem{totaro-2014}
B. Totaro,  {\it Group cohomology and algebraic cycles},Cambridge Tracts in 
Mathematics, 204, Cambridge Univ. Press, 2014.

\bibitem{vistoli-2007}
A. Vistoli, On the cohomology and the Chow ring of the classifying space of 
${\rm PGL}\sb p$, J. Reine Angew. Math. {\bf 610} (2007), 181--227. MR2359886 
(2008k:14011)


\end{thebibliography}
 \end{document}